\newcommand{\half}{\frac{1}{2}}
\newcommand{\R}{\mathbb R}
\begin{document} 
\newtheorem{prop}{Proposition}[section]
\newtheorem{Def}{Definition}[section] \newtheorem{theorem}{Theorem}[section]
\newtheorem{lemma}{Lemma}[section] \newtheorem{Cor}{Corollary}[section]

\title[Chern-Simons-Higgs and Chern-Simons-Dirac]{\bf The Chern-Simons-Higgs and the Chern-Simons-Dirac equations in Fourier-Lebesgue spaces}
\author[Hartmut Pecher]{
{\bf Hartmut Pecher}\\
Fakult\"at f\"ur Mathematik und Naturwissenschaften\\
Bergische Universit\"at Wuppertal\\
Gau{\ss}str.  20\\
42119 Wuppertal\\
Germany\\
e-mail {\tt pecher@math.uni-wuppertal.de}}
\date{}

\begin{abstract}
The Chern-Simons-Higgs and the Chern-Simons-Dirac systems in Lorenz gauge are locally well-posed in suitable Fourier-Lebesgue spaces $\hat{H}^{s,r}$. Our aim is to minimize $s=s(r)$ in the range $1<r \le 2$ . If $r \to 1$ we show that we almost reach the critical regularity dictated by scaling. In the classical case $r=2$ the results are due to Huh and Oh. Crucial is the fact that the decisive quadratic nonlinearities fulfill a null condition. 
\end{abstract}
\maketitle
\renewcommand{\thefootnote}{\fnsymbol{footnote}}
\footnotetext{\hspace{-1.5em}{\it 2000 Mathematics Subject Classification:} 
35Q40, 35L70 \\
{\it Key words and phrases:} Chern-Simons-Higgs,  Chern-Simons-Dirac,
local well-posedness, Lorenz gauge, Fourier-Lebesgue spaces}
\normalsize 
\setcounter{section}{0}
\section{Introduction and main results}
\noindent We consider the Chern-Simons-Higgs system (CSH) in the Minkowski space 
${\mathbb R}^{1+2} = {\mathbb R}_t \times {\mathbb R}_x^2$ with metric ${\eta}_{\mu \nu} = diag(1,-1,-1)$ :
\begin{align}
\label{0.1}
 F_{\mu \nu} & =  \frac{2}{\kappa} \epsilon_{\mu \nu \lambda} Im(\overline{\phi} D^{\lambda} \phi) \\
\label{0.2}
D_{\mu} D^{\mu} \phi & = - \phi V'(|\phi|^2) \, ,
\end{align}
with initial data
\begin{equation}
\label{0.3}
A_{\mu}(0) = a_{\mu} \, , \, \phi(0) = f \, , \, (\partial_t \phi)(0) = g \, , 
\end{equation}
and the Chern-Simons-Dirac system (CSD)
\begin{align}
\label{0.4}
F_{\mu\nu} & = -\frac{2}{\kappa} \epsilon_{\mu \nu \lambda} (\overline{\psi} \gamma^{\lambda} \psi) \\
\label{0.5}
i \gamma^{\mu} D_{\mu} \psi - m\psi &= 0 
\end{align}
with initial data
\begin{equation}
\label{0.6}
 \psi(0) = \psi_0 \quad , \quad  A_{\mu}(0) = a_{\mu} \, , 
\end{equation}
where we use the convention that repeated upper and lower indices are summed, Greek indices run over 0,1,2 and Latin indices over 1,2. Here 
\begin{align*}
D_{\mu}  & := \partial_{\mu} - iA_{\mu} \\
 F_{\mu \nu} & := \partial_{\mu} A_{\nu} - \partial_{\nu} A_{\mu} 
\end{align*}
$F_{\mu \nu} : {\mathbb R}^{1+2} \to {\mathbb R}$ denotes the curvature, $\phi : {\mathbb R}^{1+2} \to {\mathbb C}$ is a scalar field, $\psi$ is a column vector with 2 complex components, $\psi^{\dag}$ is the complex conjugate transpose of $\psi$ and $\overline{\psi} := \psi^{\dag}\gamma^0$ and $A_{\nu} : {\mathbb R}^{1+2} \to {\mathbb R}$ are the gauge potentials.  $\kappa > 0$ is the Chern-Simons coupling constant, $m \ge 0$ is the mass of the spinor field $\psi$. We use the notation $\partial_{\mu} = \frac{\partial}{\partial x_{\mu}}$, where we write $(x^0,x^1,...,x^n) = (t,x^1,...,x^n)$ and also $\partial_0 = \partial_t$ and $\nabla = (\partial_1,\partial_2)$. $\epsilon_{\mu \nu \lambda}$ is the totally skew-symmetric tensor with $\epsilon_{012} = 1$, and a typical Higgs potential is $V(r) = \kappa^{-2} r(1-r)^2$. 
The gamma matrices $\gamma^{\mu}$ are defined by
$$ \gamma^0 = \sigma^3 \, , \, \gamma^1 = i \sigma^2 \, , \, \gamma^2 = -i \sigma^1 \, , $$
where the Pauli matrices $\sigma^j$ are given by \\
$$ \sigma^1 = \left( \begin{array}{cc}
0 & 1  \\ 
1 & 0  \end{array} \right)  \, 
 , \, \sigma^2 = \left( \begin{array}{cc}
0 & -i  \\
i & 0  \end{array} \right) \, ,  \, \sigma^3 = \left( \begin{array}{cc}
1 & 0  \\
0 & -1  \end{array} \right) \, .$$  

The CSH-model was proposed by Hong, Kim and Pac \cite{HKP} and Jackiw and Weinberg \cite{JW} in the study of vortex solutions in the abelian Chern-Simons theory.

The CSD-model was introduced by Cho, Kim and Park \cite{CKP} and Li-Bhaduri \cite{LB}.

The equations are gauge invariant. 
The most common gauges are the Coulomb gauge $\partial^j A_j =0$ , the Lorenz gauge $\partial^{\mu} A_{\mu} = 0$ and the temporal gauge $A_0 = 0$. In this paper we exclusively study the Lorenz gauge.

The Fourier-Lebesgue spaces $\widehat{H}^{s,r}$ are defined as the completion of ${\mathcal S}(\R^2)$ with respect to the norm $\|f\|_{\widehat{H}^{s,r}} = \| \langle \xi \rangle^s \widehat{f}\|_{L^{r'}}$, where $\frac{1}{r} + \frac{1}{r'} = 1$, and $\widehat{f}$ denotes the Fourier transform of $f$.\\[0.2em]

The aim is to minimize the regularity of the data so that local well-posedness holds. Persistence of higher regularity is then a consequence of the fact that the results are obtained by a Picard iteration.

Our results for both systems turn out to be almost optimal with respect to scaling for $r$ close to $1$  (as shown at the end of this section).\\[0.2em]

We now formulate the main result for the CSH system.
\begin{theorem}
\label{Thm.0.1}
Let $1 < r \le 2$ and $s > \frac{3}{2r} - \half$ . Assume
$$ a_\mu \in \widehat{H}^{s,r}(\R^2) \, , \, f \in \widehat{H}^{s+\half,r}(\R^2) \, , \, g \in \widehat{H}^{s+\half,r}(\R^2) $$
satisfy the constraint equation
$$ \partial_1 a_2 - \partial_2 a_1 = \frac{2}{\kappa} Im(\overline{f}(g-ia_0f)) \, . $$
Then there exists $ T > 0$ , $T=T(\|a_{\mu}\|_{\widehat{H}^{s,r}} , \|f\|_{\widehat{H}^{s+\half,r}} , \|g\|_{\widehat{H}^{s-\half,r}})$ such that the CSH system (\ref{0.1}),(\ref{0.2}),(\ref{0.3}) under the Lorenz gauge $\partial^{\mu} A_{\mu} =0$ has a unique solution
$$ A_{\mu} \in X^r_{s,b,+}[0,T] + X^r_{s,b,-}[0,T] \, , \, \phi \in X^r_{s+\half,b,+}[0,T] + X^r_{s+\half,b,-}[0,T] ,$$
$$ \partial_t \phi \in X^r_{s-\half,b,+}[0,T] + X^r_{s-\half,b,-}[0,T] , $$
where $b= \half+\frac{1}{2r}+$ . This solution satisfies
$$ A_{\mu} \in C^0([0,T],\widehat{H}^{s,r}) \, ,  \phi \in C^0([0,T],\widehat{H}^{s+\half,r}) \, ,  \partial_t \phi \in C^0([0,T],\widehat{H}^{s-\half,r}) \, . $$
\end{theorem}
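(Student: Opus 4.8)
The plan is to use the Lorenz gauge to rewrite the system as a coupled system of nonlinear wave equations, to pass to a first order (half-wave) formulation, and to solve by a contraction argument in the Fourier--Lebesgue Bourgain spaces $X^r_{s,b,\pm}$; the whole scheme stands or falls with a pair of bilinear null-form estimates, which is where the exponent $s>\frac{3}{2r}-\half$ is pinned down. I would begin by eliminating the gauge redundancy. Since $\partial^\mu A_\mu=0$ gives $\partial^\mu F_{\mu\nu}=\Box A_\nu$, applying $\partial^\mu$ to (\ref{0.1}) converts the curvature equation into
\[ \Box A_\nu=\frac{2}{\kappa}\,\epsilon_{\mu\nu\lambda}\,\partial^\mu\, Im\big(\overline{\phi}\,D^\lambda\phi\big), \]
while expanding $D_\mu D^\mu\phi$ and using $\partial^\mu A_\mu=0$ turns (\ref{0.2}) into
\[ \Box\phi=2iA^\mu\partial_\mu\phi+A_\mu A^\mu\phi-\phi V'(|\phi|^2). \]
The data $(\partial_t A_\mu)(0)$ needed to launch these wave equations are recovered from the $F_{\mu 0}(0)$ components of (\ref{0.1}) together with the Lorenz condition, and the compatibility of $a_\mu,f,g$ is exactly the $(1,2)$ component of (\ref{0.1}) at $t=0$, i.e. the constraint $\partial_1a_2-\partial_2a_1=\frac{2}{\kappa}Im(\overline f(g-ia_0f))$ in the hypothesis.

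The key structural point is that the decisive quadratic terms are null forms. In the $A_\nu$-equation, $\partial^\mu\,Im(\overline\phi\,\partial^\lambda\phi)$ produces a symmetric term $\overline\phi\,\partial^\mu\partial^\lambda\phi$ that is annihilated on contraction with the antisymmetric $\epsilon_{\mu\nu\lambda}$, leaving only
\[ \frac{2}{\kappa}\,\epsilon_{\mu\nu\lambda}\,Im\big(\partial^\mu\overline{\phi}\,\partial^\lambda\phi\big), \]
a linear combination of the classical null forms $Q_{\mu\lambda}(\overline\phi,\phi)=\partial^\mu\overline\phi\,\partial^\lambda\phi-\partial^\lambda\overline\phi\,\partial^\mu\phi$ (the contribution of $-iA^\lambda\phi$ being a cubic, lower-order term). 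Likewise the quadratic term $A^\mu\partial_\mu\phi$ in the $\phi$-equation satisfies a null condition, which becomes manifest once $A$ is taken to be the wave part generated by the previous equation and both factors are split into half-waves. I would make this precise by writing $u_\pm=\half\big(u\mp i|\nabla|^{-1}\partial_t u\big)$ for each of $A_\mu,\phi$, so that $\Box u=N$ is replaced by the two first order equations $(i\partial_t\pm|\nabla|)u_\pm=N_\pm$, with $N_\pm$ a harmless multiple of $|\nabla|^{-1}N$, adapted to the spaces $X^r_{s,b,\pm}$ carrying the (essentially) $\|\langle\xi\rangle^s\langle\tau\pm|\xi|\rangle^b\widehat u\|_{L^{r'}_{\tau,\xi}}$ norm.

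The solution is then produced by the contraction mapping principle. This rests on the linear theory in these spaces: the homogeneous and Duhamel estimates and the embedding $X^r_{s,b,\pm}[0,T]\hookrightarrow C^0([0,T],\widehat H^{s,r})$, which holds precisely because $b=\half+\frac{1}{2r}+$ exceeds the threshold $\frac1r$ needed for the $\tau$-integral to converge in $L^{r'}$ (for $r=2$ this is the familiar $b>\half$); a small positive power of $T$ is gained in the standard way by lowering the modulation exponent. The nonlinear input consists of the bilinear estimates
\[ \big\|\,\epsilon_{\mu\nu\lambda}\,\partial^\mu\overline\phi\,\partial^\lambda\phi\,\big\|_{X^r_{s,b-1,\pm}}\lesssim\|\phi\|_{X^r_{s+\half,b,\pm_1}}\|\phi\|_{X^r_{s+\half,b,\pm_2}} \]
for the $A$-equation and
\[ \|A^\mu\partial_\mu\phi\|_{X^r_{s+\half,b-1,\pm}}\lesssim\|A\|_{X^r_{s,b,\pm_1}}\|\phi\|_{X^r_{s+\half,b,\pm_2}} \]
for the $\phi$-equation; here the null-form gain is exactly what compensates the derivative loss and lets these close down to $s>\frac{3}{2r}-\half$. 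The remaining terms, namely $A_\mu A^\mu\phi$, the cubic terms of the $A$-equation (carrying at most one derivative), and the polynomial $\phi V'(|\phi|^2)$, are handled by easier multilinear product estimates in the $\widehat H^{s,r}$-based spaces.

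The main obstacle is proving the two bilinear null-form estimates in the Fourier--Lebesgue ($L^{r'}$) setting rather than the usual $L^2$ one. For $r=2$ these follow from Plancherel together with the $L^2$ bilinear/Strichartz machinery for the cone; for $r\neq 2$ one loses both Plancherel and $L^2$-orthogonality and must argue directly on the Fourier side. Concretely I would decompose dyadically in the frequencies $\langle\xi_1\rangle,\langle\xi_2\rangle,\langle\xi\rangle$ and in the three modulations $\langle\tau_j\pm_j|\xi_j|\rangle$, bound the null symbol by the angle between the signed spatial frequencies, which is in turn controlled by the smallest modulation and the frequencies, and then estimate the resulting convolutions in $L^{r'}$ by Hausdorff--Young and H\"older before summing the dyadic pieces. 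Tracking the exponents in this summation is precisely what fixes the lower bound $s(r)=\frac{3}{2r}-\half$, and making the convolution and angular estimates efficient enough in $L^{r'}$, where the clean $L^2$ geometry of the cone is unavailable, is the technical heart of the argument.
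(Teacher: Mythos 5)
Your overall skeleton --- Lorenz-gauge reduction to a system of nonlinear wave equations, half-wave splitting, contraction in $X^r_{s,b,\pm}$ driven by bilinear null-form estimates --- matches the paper, and your way of exhibiting the null structure in the $A_\nu$-equation (cancelling the symmetric term $\overline{\phi}\,\partial^\mu\partial^\lambda\phi$ against $\epsilon_{\mu\nu\lambda}$ to leave a $Q_{\mu\lambda}(\overline\phi,\phi)$ form) is a legitimate variant of what the paper does by duality through the Riesz transforms $R^\mu_\pm$. The genuine gap is in your treatment of $A^\mu\partial_\mu\phi$. You assert that this term ``satisfies a null condition, which becomes manifest once $A$ is taken to be the wave part and both factors are split into half-waves''; that is not the case. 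The half-wave splitting by itself reveals no null structure here, and the raw product estimate $\|A^\mu\partial_\mu\phi\|_{X^r_{s-\half,b-1+}}\lesssim\|A\|_{X^r_{s,b}}\|\phi\|_{X^r_{s+\half,b}}$ is false for $s$ near $\frac{3}{2r}-\half$ (for $r=2$ it would require a product estimate with $s_0+s_1+s_2=\frac14+$, far below the necessary threshold $\frac34$ of Prop.~\ref{AFS}). The actual mechanism, which the paper takes from Huh--Oh and Selberg--Tesfahun, is the Hodge decomposition $A_j=A_j^{df}+A_j^{cf}$: the divergence-free part pairs with $R^j_{\pm}\phi_{\pm}$ to give $Q^{12}$ null forms applied to a curl potential $B_\pm$, while $A_0R^0_{\pm}\phi_{\pm}+A^{cf}_jR^j_{\pm}\phi_{\pm}$ combines into the null form $Q^0_{\pm_1,\pm_2}$. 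Without this step the iteration cannot close at the claimed regularity. Relatedly, your displayed estimate $\|A^\mu\partial_\mu\phi\|_{X^r_{s+\half,b-1,\pm}}\lesssim\|A\|_{X^r_{s,b,\pm_1}}\|\phi\|_{X^r_{s+\half,b,\pm_2}}$ has the wrong output regularity: after the $\langle D\rangle^{-1}$ of the half-wave reduction the nonlinearity must be placed in $X^r_{s-\half,b-1+}$, not $X^r_{s+\half,b-1+}$; as written it loses a full derivative and fails for every $s$.

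On the bilinear estimates themselves, your plan (dyadic decomposition in frequencies and modulations, angle controlled by the minimal modulation, Hausdorff--Young and H\"older in $L^{r'}$) is the Grigoryan--Nahmod/Grigoryan--Tanguay route and could in principle be carried through, but the paper proceeds differently: it transfers everything to Foschi--Klainerman's explicit evaluation of $\int\delta(\tau-|\eta|\mp|\xi-\eta|)|\eta|^{-\alpha_1 r}|\xi-\eta|^{-\alpha_2 r}\,d\eta$ for products of free waves, proves the estimates at the endpoints $r=1+$ and $r=2$ (the latter via the d'Ancona--Foschi--Selberg product estimates), and interpolates in $r$. Whichever route you take, the cubic terms $A^\mu A_\mu\phi$ and $\epsilon_{\mu\nu\lambda}R^\mu_{\pm}(A^\lambda|\phi|^2)$ and the quintic term from $\phi V'(|\phi|^2)$ still require separate multilinear estimates; they are routine in spirit but occupy a nontrivial part of the paper's Section 3 and should not be dismissed in one line.
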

The spaces $X^r_{s,b,\pm}$ are generalizations of the Bourgain-Klainerman-Machedon spaces $X^{s,b}$ (for $r=2$). We define $X^r_{s,b\pm}$ as the completion of ${\mathcal S}(\R^{1+2})$ with respect to the norm
$$ \|\phi\|_{X^r_{s,b\pm}} := \| \langle \xi \rangle^s \langle \tau \pm |\xi| \rangle^b \tilde{\phi}(\tau,\xi)\|_{L^{r'}_{\tau \xi}} $$
for $1 \le r \le 2$, $\frac{1}{r} + \frac{1}{r'}=1$ , where $\, \tilde{} \,$ denotes the Fourier transform with respect to space and time.\\[0.5em]

For the CSD system the main result is the following.
\begin{theorem}
\label{Thm.0.2}
Let $1 < r \le 2$ and $s > \frac{3}{2r} - \half$ . Assume
$$ a_\mu \, , \, \psi \in \hat{H}^{s,r}(\R^2) $$
satisfy the constraint equation
$$ \partial_1 a_2 - \partial_2 a_1 = - \frac{2}{\kappa} (\psi_0 ^{\dag} \psi_0) \, . $$
Then there exists $T>0$ , $T=T(\|a_{\mu}\|_{\widehat{H}^{s,r}} , \|\phi\|_{\widehat{H}^{s,r}}) $ such that the CSD system (\ref{0.4}),(\ref{0.5}),(\ref{0.6}) under the Lorenz gauge $\partial^{\mu} A_{\mu} =0$ has a unique solution
$$ A_{\mu} , \psi \in X^r_{s,b,+}[0,T] + X^r_{s,b,-}[0,T] \,,$$
where $b= \half+\frac{1}{2r}+$ . 
This solution satisfies
$$ A_{\mu},\psi \in C^0([0,T],\widehat{H}^{s,r}) \, . $$
\end{theorem}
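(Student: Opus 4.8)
The plan is to run the same scheme as for the CSH system in Theorem \ref{Thm.0.1}: reduce the coupled gauge--Dirac problem to a first--order system of half--wave type and solve it by a contraction argument in the spaces $X^r_{s,b,\pm}$.

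First I would reformulate. Applying $\partial^\mu$ to the curvature equation (\ref{0.4}) and using the Lorenz gauge $\partial^\mu A_\mu = 0$ gives $\Box A_\nu = \partial^\mu F_{\mu\nu} = -\frac{2}{\kappa}\epsilon_{\mu\nu\lambda}\partial^\mu(\overline{\psi}\gamma^\lambda\psi)$, so each potential $A_\nu$ solves a wave equation whose source is a first spacetime derivative of the quadratic spinor current. The hypothesis in the statement is exactly the $(\mu,\nu)=(1,2)$ component of (\ref{0.4}) evaluated at $t=0$, since $\epsilon_{120}=1$ and $\overline{\psi}\gamma^0\psi=\psi^{\dag}\psi$; it ensures that the data are compatible with (\ref{0.4}). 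For the spinor I would multiply (\ref{0.5}) by $\gamma^0$ and isolate the time derivative, writing the Dirac equation as $i\partial_t\psi = (\alpha\cdot D + m\beta)\psi - \gamma^0\gamma^\mu A_\mu\psi$ with $\alpha^j=\gamma^0\gamma^j$, $\beta=\gamma^0$ and $D=-i\nabla$. Diagonalizing the symbol $\alpha\cdot\xi+m\beta$ by its eigenvalue projections $\Pi_\pm(\xi)$ and putting $\psi_\pm=\Pi_\pm(D)\psi$ then turns this into two half--wave equations $(i\partial_t\mp|D|)\psi_\pm = \Pi_\pm(D)(\dots)$, the mass term being moved to the right--hand side as a lower--order perturbation (consistent with the weight $\langle\tau\pm|\xi|\rangle$ in the definition of $X^r_{s,b,\pm}$). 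Splitting $A_\nu=A_{\nu,+}+A_{\nu,-}$ into half--waves as well completes the reduction to a system with purely quadratic nonlinearities of the two types $\overline{\psi_1}\gamma^\lambda\psi_2$ (feeding $A$) and $A_\mu\gamma^\mu\psi$ (feeding $\psi$).

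Next I would close the iteration. Using the linear estimate bounding $\|u\|_{X^r_{s,b,\pm}[0,T]}$ by $\|u(0)\|_{\widehat{H}^{s,r}}$ plus the source norm in $X^r_{s,b-1+,\pm}[0,T]$, the embedding $X^r_{s,b,\pm}[0,T]\hookrightarrow C^0([0,T],\widehat{H}^{s,r})$ (valid for $b>\frac{1}{r}$, which holds since $b=\half+\frac{1}{2r}+>\frac{1}{r}$ whenever $r>1$), and the usual small power $T^\theta$ gained by slightly lowering the $b$--exponent, the whole problem reduces to two bilinear estimates: one for the spinor current $\overline{\psi_1}\gamma^\lambda\psi_2$ (which, after the $|D|^{-1}\partial$ arising from the first--order reduction of $\Box$, amounts to a zero--derivative product) and one for the coupling $A_\mu\gamma^\mu\psi$, both mapping the product of the $X^r_{s,b,\pm}$--norms of the factors into $X^r_{s,b-1+,\pm}$.

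The decisive point, and the main obstacle, is that both bilinear forms carry a null structure. After inserting the projections, the symbols $\Pi_{\pm_1}(\xi)\gamma^0\gamma^\lambda\Pi_{\pm_2}(\eta)$ and the corresponding coupling symbols are of size $O(\angle(\pm_1\xi,\pm_2\eta))$ exactly in the resonant sign configurations, where all three modulation weights $\langle\tau\mp|\xi|\rangle$ are simultaneously small; this is the null condition referred to in the abstract, and I would exhibit it through the projection algebra of the Dirac operator (in the spirit of d'Ancona, Foschi and Selberg). The real work is then to prove the resulting null--form bilinear estimates in the Fourier--Lebesgue framework: since Plancherel and $L^2$ orthogonality are unavailable for $r<2$, one must argue through dyadic and angular decompositions combined with H\"older and convolution ($L^{r'}$) estimates, and it is the angular gain supplied by the null form that compensates for the weak modulation weights and fixes the threshold $s>\frac{3}{2r}-\half$. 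Because the nonlinearities are quadratic and the solution is produced by Picard iteration, uniqueness in the iteration space and persistence of higher regularity are automatic, and the stated time--continuity follows from the embedding above.
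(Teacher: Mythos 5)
Your overall architecture matches the paper's: square the gauge equation to $\Box A_\nu = \partial^\mu N_{\mu\nu}(\psi,\psi)$, diagonalize the Dirac operator with the projections $\Pi_\pm$, split everything into half-waves, and close a Picard iteration in $X^r_{s,b,\pm}$ via two bilinear estimates; the role of the constraint equation and the embedding into $C^0([0,T],\hat{H}^{s,r})$ for $b>\frac{1}{r}$ are also as in the paper.

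The gap is in how you identify the null structure. You assert that the symbols $\Pi_{\pm_1}(\xi)\gamma^0\gamma^\lambda\Pi_{\pm_2}(\eta)$ ``and the corresponding coupling symbols'' are of size $O(\angle(\pm_1\xi,\pm_2\eta))$. That is not true componentwise: for fixed $\lambda$ the matrix $\Pi_{\pm_1}(\xi)\,\alpha^\lambda\,\Pi_{\pm_2}(\eta)$ does not vanish as the angle closes. What the paper actually does is apply the commutator identity $\alpha^\mu\Pi_\pm = \Pi_\mp\alpha^\mu\Pi_\pm - R^\mu_\pm\Pi_\mp$ to split each nonlinearity into two pieces. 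The first piece, sandwiched between opposite projections, is controlled by $|\Pi(\xi_1)\Pi(-\xi_2)z|\lesssim |z|\,\angle(\xi_1,\xi_2)$ and does gain an angle. The second piece is a Riesz-transform remainder with no angular gain of its own; its null structure only emerges after contracting with the $R^\nu_{\pm_0}$ coming from the first-order reduction of $\Box$ and exploiting the antisymmetry of $\epsilon_{\mu\nu\lambda}$ to produce the null form $Q^{\lambda\nu}$ (for the current $N_{\mu\nu}$), respectively after the Hodge decomposition $A_i=A_i^{df}+A_i^{cf}$ and a duality argument producing $Q^{12}$ and $Q^0$ (for the coupling $A_\mu\alpha^\mu\psi$). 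Without these additional structural inputs the bilinear estimates fail at $s>\frac{3}{2r}-\half$. A secondary omission: the ``real work'' you defer is not done in the paper by ad hoc dyadic and angular decompositions, but by transferring the Foschi--Klainerman computations for products of free waves to $X^r_{s,b}$ via the transfer principle, combined with the hyperbolic Leibniz rule, the Grigoryan--Tanguay product estimate, the d'Ancona--Foschi--Selberg estimates at $r=2$, and multilinear interpolation between $r=1+$ and $r=2$; this interpolation step is essential to cover the whole range $1<r\le 2$ and does not appear in your outline.
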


{\bf Remark:} As these results are obtained by a Picard iteration it is well-known that continuous dependence on the data and persistence of higher regularity hold. \\[0.5em]

In the case $r=2$ , $s > \frac{1}{4}$ , $b=\frac{3}{4}+$ these results were obtained by Huh and Oh \cite{HO} for the CSH and the CSD system in Lorenz gauge. Their article is the bases of the present paper. They lowered down the regularity assumptions on the data improving earlier local well posedness results for the CSD system by Huh \cite{H} who had to assume $a_{\mu} \in H^{\half} $ , $\psi_0 \in H^{\frac{5}{8}}$ in Lorenz gauge, and $a_{\mu} \in L^2$ , $\psi_0 \in H^{\half+}$ in Coulomb gauge. In Coulomb gauge local well-posedness was shown for $\psi_0 \in H^{\frac{1}{4}+}$ by Bournaveas-Candy-Machihara \cite{BCM} and by a  different method by the author \cite{P}, who also proved local well-posedness in temporal gauge for $\psi_0 \in H^{\frac{3}{8}+}$ .

In the CSH case Selberg and Tesfahun \cite{ST} obtained global well-posedness in Lorenz gauge under a sign condition on $V$ for data $a_{\mu} \in \dot{H}^{\half}$ , $\phi \in H^1$ , $\partial_t \phi \in L^2$ , which is the energy regularity. They also obtained a local solution $A_{\mu} \in C^0([0,T],H^{\frac{3}{8}+})$ ,  $\phi \in C^0([0,T],H^{\frac{7}{8}+}) \cap C^1([0,T],H^{-\frac{1}{8}+})$. In temporal gauge global well-posedness in energy space and above was shown by the author \cite{P1}.

Most of these results used the fact that some or all quadratic nonlinear terms satisfy null conditions, which is also crucial for our paper. We also rely on bilinear estimates in $X^{s,b}$ - spaces by Foschi and Klainerman \cite{FK} and d'Ancona-Foschi-Selberg \cite{AFS}.

Well-posedness problems in Fourier-Lebesgue spaces $\hat{H}^{s,r}$ were first considered by Vargas-Vega \cite{VV} for 1D Schr\"odinger equations. Gr\"unrock showed LWP for the modified KdV equation \cite{G}, a result which was improved by Gr\"unrock and Vega \cite{GV}. Gr\"unrock treated derivative nonlinear wave equations in 3+1 dimensions \cite{G1} and obtained an almost optimal result as $r \to 1$ with respect to scaling using calculations by Foschi and Klainerman \cite{FK}. Later Grigoryan-Tanguay \cite{GT} gave similar results in 2+1 dimensions based on bilinear estimates by Selberg \cite{S}. 

Systems of nonlinear wave equations in the 2+1 dimensional case for nonlinearities which fulfill a null condition were considered by Grigoryan-Nahmod \cite{GN}. This paper is fundamental for the present paper, because the CSH as well as the CSD equations in Lorenz gauge are systems of a similar form.

The paper is organized as follows. In Chapter 2 we start by formulating the general local well-posedness (LWP) theorem for systems of nonlinear wave equations, as it was given by Gr\"unrock \cite{G}. Then estimates for the product of two solutions of the linear wave equation in Fourier-Lebesgue spaces are given, based mainly on Foschi-Klainerman \cite{FK}, who treated the $L^2$-based case. Moreover we also rely on bilinear estimates by d'Ancona-Foschi-Selberg \cite{AFS} in the $L^2$-case and its generalization to the general case by Grigoryan-Tanguay \cite{GT}. In addition, we have to estimate cubic nonlinearities. In Chapter 3 the proof of LWP for the Chern-Simons-Higgs system is given, following Huh-Oh \cite{HO}, who considered the $L^2$-case, and proved that the system can be reformulated as a system of nonlinear wave equations, which fulfill a null condition either directly or by duality. For the necessary estimates we rely on the bilinear estimates given in Chapter 2. In Chapter 4 the LWP result for the Chern-Simons-Dirac system is proven. We use 
the results by d'Ancona-Foschi-Selberg \cite{AFS1} for the case of the Dirac-Klein-Gordon equations concerning the Dirac part, and Huh-Oh \cite{HO} again when reformulating the system as a system of nonlinear wave equations with nonlinearities which fulfill a null condition. The necessary estimates which were given by Huh-Oh for the $L^2$-case are then implied by the generalizations in Chapter 2.

The CSH system is invariant under the scaling
$$ \phi_{\lambda}(t,x) = \lambda^{\half} \phi(\lambda t, \lambda x) \, , \,  A_{\mu}^{\lambda}(t,x)= \lambda A_{\mu}(\lambda x,\lambda t) \, . $$
Under this scaling the norms of the initial data satisfy
\begin{align*}
\| \phi_{\lambda}(0,\cdot)\|_{\dot{\hat{H}}^{r,s+\half}(\R^2)}& = \lambda^{1+s-\frac{2}{r}} \|f\|_{\dot{\hat{H}}^{r,s+\half}(\R^2)}  \\
\| A_{\mu}^{\lambda}(0,\cdot)\|_{\dot{\hat{H}}^{r,s}(\R^2)}& = \lambda^{1+s-\frac{2}{r}} \|a_{\mu}\|_{\dot{\hat{H}}^{r,s+\half}(\R^2)} \, .
\end{align*}
Therefore the scaling critical exponent is $s_c = \frac{2}{r} -1$ for $\phi$ and $A_{\mu}$. For $r=2$ we have $s_c =0$, so that the result of Huh-Oh \cite{HO} is $1/4$ away from it, whereas for $r=1+$ we have $s_c = 1-$,  so that for $r$ close to $1$ Theorem \ref{Thm.0.1} is optimal up to the endpoint.

The CSD system is invariant under the scaling
$$ \psi_{\lambda}(t,x) = \lambda \psi(\lambda t, \lambda x) \, , \,  A_{\mu}^{\lambda}(t,x)= \lambda A_{\mu}(\lambda x,\lambda t) \, , $$
so that
\begin{align*}
\| \psi_{\lambda}(0,\cdot)\|_{\dot{\hat{H}}^{r,s(\R^2)}}& = \lambda^{1+s-\frac{2}{r}} \|\psi_0\|_{\dot{\hat{H}}^{r,s}(\R^2)}  \\
\| A_{\mu}^{\lambda}(0,\cdot)\|_{\dot{\hat{H}}^{r,s}(\R^2)}& = \lambda^{1+s-\frac{2}{r}} \|a_{\mu}\|_{\dot{\hat{H}}^{r,s+\half}(\R^2)} \, .
\end{align*}
The critical exponent is $s_c = \frac{2}{r} -1$ for $\phi$ and $A_{\mu}$. Similar as for the CSH system we have the result of Huh-Oh in the case $r=2$, which is $1/4$ away from the critical regularity and the almost optimal result for $r=1+$ in Theorem \ref{Thm.0.2}.

\section{Bilinear estimates}
We start by collecting some fundamental properties of the solution spaces. We rely on \cite{G}. The spaces $X^r_{s,b,\pm} $ with norm  $$ \|\phi\|_{X^r_{s,b\pm}} := \| \langle \xi \rangle^s \langle \tau \pm |\xi| \rangle^b \tilde{\phi}(\tau,\xi)\|_{L^{r'}_{\tau \xi}} $$ for  $1<r<\infty$ are Banach spaces with ${\mathcal S}$ as a dense subspace. The dual space is $X^{r'}_{-s,-b,\pm}$ , where $\frac{1}{r} + \frac{1}{r'} = 1$. The complex interpolation space is given by
$$(X^{r_0}_{s_0,b_0,\pm} , X^{r_1}_{s_1,b_1,\pm})_{[\theta]} = X^r_{s,b,\pm} \, , $$
where $s=(1-\theta)s_0+\theta s_1$, $\frac{1}{r} = \frac{1-\theta}{r_0} + \frac{\theta}{r_1}$ , $b=(1-\theta)b_0 + \theta b_1$ . Similar properties has the space $X^r_{s,b}$ , defined by its norm 
$$ \|\phi\|_{X^r_{s,b}} := \| \langle \xi \rangle^s \langle |\tau| - |\xi| \rangle^b \tilde{\phi}(\tau,\xi)\|_{L^{r'}_{\tau \xi}} \, . $$ 
We denote $X^2_{s,b}$ by $H^{s,b}$ .
We also define
$$ X^r_{s,b,\pm}[0,T] = \{ u = U_{|[0,T]\times \R^2} \, : \, U \in X^r_{s,b,\pm} \} $$
with
$$ \|u\|_{X^r_{s,b,\pm}[0,T]} := \inf \{ \|U\|_{X^r_{s,b,\pm}} : U_{|[0,T]\times \R^2} = u \} $$
and similarly $X^r_{s,b}[0,T]$ . \\
If $u=u_++u_-$, where $u_{\pm} \in X^r_{s,b,\pm} [0,T]$ , then $u \in C^0([0,T],\hat{H}^{s,r})$ , if $b > \frac{1}{r}$ .

The "transfer principle" in the following proposition, which is well-known in the case $r=2$, also holds for general $1<r<\infty$ (cf. \cite{GN}, Prop. A.2 or \cite{G}, Lemma 1). We denote $ \|u\|_{\hat{L}^p_t(\hat{L}^q_x)} := \|\tilde{u}\|_{L^{p'}_{\tau} (L^{q'}_{\xi})}$ .
\begin{prop}
\label{Prop.0.1}
Let $1 \le p,q \le \infty$ .
Assume that $T$ is a bilinear operator which fulfills
$$ \|T(e^{\pm_1 itD} f_1, e^{\pm_2itD} f_2)\|_{\hat{L}^p_t(\hat{L}^q_x)} \lesssim \|f_1\|_{\hat{H}^{s_1,r}} \|f_2\|_{\hat{H}^{s_2,r}}$$
for all combinations of signs $\pm_1,\pm_2$ , then for $b > \frac{1}{r}$ the following estimate holds:
$$ \|T(u_1,u_2)\|_{\hat{L}^p_t(\hat{L}^q_x)} \lesssim \|u_1\|_{X^r_{s_1,b}}  \|u_2\|_{X^r_{s_2,b}} \, . $$
\end{prop}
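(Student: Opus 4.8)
The plan is to prove the "transfer principle" of Proposition \ref{Prop.0.1} by reducing the $X^r_{s,b}$-bound to the assumed free-wave estimate via a Fourier-analytic decomposition of a general space-time function into a superposition of modulated free waves. The key observation is that the weight $\langle |\tau|-|\xi|\rangle^b$ appearing in the $X^r_{s,b}$-norm measures the distance of $(\tau,\xi)$ from the light cone $|\tau|=|\xi|$, which is exactly the cone swept out by the free propagators $e^{\pm itD}$. I would therefore begin by writing $u_j = u_{j,+}+u_{j,-}$, splitting according to the sign of $\tau$, so that it suffices to treat each $\pm$ component separately; the off-diagonal interactions are handled by the hypothesis being assumed for all four sign combinations $\pm_1,\pm_2$.

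The central step is the representation. For a fixed sign, writing $\lambda = \tau \mp |\xi|$ as the modulation variable, one has the identity
\begin{equation*}
u_{j,\pm}(t,x) = \int_{\R} e^{\pm i\lambda t}\,\bigl(e^{\pm itD} g_{j,\lambda}^{\pm}\bigr)(x)\,d\lambda,
\end{equation*}
where $g_{j,\lambda}^{\pm}$ is the function whose spatial Fourier transform is $\widetilde{u_{j,\pm}}(\lambda \pm |\xi|,\xi)$, i.e. the restriction of $\tilde u_{j,\pm}$ to the translated cone indexed by $\lambda$. Inserting this into the bilinear operator $T$ and using its bilinearity, I would pull the two $\lambda$-integrations out front and apply the triangle inequality in the $\hat L^p_t(\hat L^q_x)$-norm to each fixed pair $(\lambda_1,\lambda_2)$. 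The extra factors $e^{\pm i\lambda t}$ are harmless: a modulation in $t$ is a frequency translation in $\tau$, which leaves the $\hat L^p_t$-norm invariant since that norm is built from $|\tilde u|$ alone. This lets me invoke the hypothesis on each slice, bounding the fixed-$(\lambda_1,\lambda_2)$ piece by $\|g_{1,\lambda_1}^{\pm}\|_{\hat H^{s_1,r}}\|g_{2,\lambda_2}^{\pm}\|_{\hat H^{s_2,r}}$.

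It then remains to perform the $\lambda_1,\lambda_2$ integrations. After the triangle inequality I am left with $\int\int \|g_{1,\lambda_1}^{\pm}\|_{\hat H^{s_1,r}}\|g_{2,\lambda_2}^{\pm}\|_{\hat H^{s_2,r}}\,d\lambda_1\,d\lambda_2$, which factors as a product of two integrals of the form $\int \|g_{j,\lambda}^{\pm}\|_{\hat H^{s_j,r}}\,d\lambda$. The anticipated main obstacle is precisely this $\lambda$-integration, where the hypothesis $b>\tfrac1r$ enters decisively. Here I would insert and remove the modulation weight, writing $\|g_{j,\lambda}^{\pm}\|_{\hat H^{s_j,r}} = \langle\lambda\rangle^{-b}\cdot\langle\lambda\rangle^{b}\|g_{j,\lambda}^{\pm}\|_{\hat H^{s_j,r}}$, and apply Hölder in $\lambda$ with exponents $r'$ and $r$. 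The factor $\langle\lambda\rangle^{-b}$ lies in $L^{r}_\lambda$ exactly when $br>1$, i.e. $b>\tfrac1r$, which is the stated condition; this produces a finite constant. The remaining factor $\bigl\|\langle\lambda\rangle^{b}\|g_{j,\lambda}^{\pm}\|_{\hat H^{s_j,r}}\bigr\|_{L^{r'}_\lambda}$ is, by Fubini and the definition of the norms, exactly $\|u_{j,\pm}\|_{X^r_{s_j,b,\pm}}$, and since $\langle\tau\pm|\xi|\rangle$ and $\langle|\tau|-|\xi|\rangle$ agree on the support of $u_{j,\pm}$ (fixed sign of $\tau$), this equals $\|u_{j,\pm}\|_{X^r_{s_j,b}}$. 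Summing the finitely many sign contributions and using $\|u_{j,\pm}\|_{X^r_{s_j,b}}\lesssim\|u_j\|_{X^r_{s_j,b}}$ completes the argument. The only subtlety to watch is the interchange of the $\lambda$-integral with the norms, which is justified by working first with Schwartz functions (dense in $X^r_{s,b}$) and passing to the limit.
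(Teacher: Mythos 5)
Your argument is correct and is essentially the paper's approach: the paper gives no proof of this proposition, deferring to \cite{GN} (Prop.\ A.2) and \cite{G} (Lemma 1), and your foliation of $\tilde u_{j,\pm}$ over translated light cones, Minkowski's integral inequality, and H\"older in the modulation variable (with $b>\tfrac1r$ ensuring $\langle\lambda\rangle^{-b}\in L^r_\lambda$) is exactly the standard argument those references use. The sign bookkeeping and the identification $\langle\lambda\rangle^{b}=\langle|\tau|-|\xi|\rangle^{b}$ on each half-space in $\tau$ are handled correctly, so there is nothing to add.
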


The general local well-posedness theorem is the following (cf. \cite{G}, Theorem 1).
\begin{theorem}
\label{Theorem0.3}
Let $N(u)$ be a nonlinear function of degree $\alpha > 0$.
Assume that for given $s \in \R$, $1 < r < \infty$ there exist $ b > \frac{1}{r}$ and $b'\in (b-1,0)$ such that the estimates
$$ \|N(u)\|_{X^r_{s,b',\pm}} \le c \|u\|^{\alpha}_{X^r_{s,b,\pm}} $$
and 
$$\|N(u)-N(v)\|_{X^r_{s,b',\pm}} \le c (\|u\|^{\alpha-1}_{X^r_{s,b,\pm}} + \|v\|^{\alpha-1}_{X^r_{s,b,\pm}}) \|u-v\|_{X^r_{s,b,\pm}} $$
are valid. Then there exist $T=T(\|u_0\|_{\hat{H}^{s,r}})>0$ and a unique solution $u \in X^r_{s,b,\pm}[0,T]$ of the Cauchy problem
$$ \partial_t u \pm iDu = N(u) \quad , \quad u(0) = u_0 \in \hat{H}^{s,r} \, , $$
where $D$ is the operator with Fourier symbol $|\xi|$. This solution is persistent and the mapping data upon solution $u_0 \mapsto u$ , $\hat{H}^{s,r} \to X^r_{s,b,\pm}[0,T_0]$ is locally Lipschitz continuous for any $T_0 < T$.
\end{theorem}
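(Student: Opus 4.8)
The plan is to recast the Cauchy problem as a fixed-point equation and solve it by the contraction mapping principle in a ball of the restriction space $X^r_{s,b,\pm}[0,T]$. By Duhamel's formula, solving $\partial_t u \pm iDu = N(u)$ with $u(0)=u_0$ is equivalent to finding a fixed point of the map
$$ \Phi(u)(t) := e^{\mp itD}u_0 + \int_0^t e^{\mp i(t-t')D} N(u)(t')\,dt' \, . $$
The argument rests on two linear ingredients which I would establish first. Fix a smooth time cutoff $\psi$ equal to $1$ near $t=0$. The homogeneous estimate $\|\psi\, e^{\mp itD}u_0\|_{X^r_{s,b,\pm}} \lesssim \|u_0\|_{\hat{H}^{s,r}}$ holds because the space-time Fourier transform of $e^{\mp itD}u_0$ is supported on the characteristic surface $\tau \pm |\xi| = 0$; after inserting $\psi$ the modulation variable $\tau \pm |\xi|$ is tied to the argument of $\hat\psi$, which is Schwartz, so the weight $\langle\tau\pm|\xi|\rangle^{b}$ is absorbed and only $\langle\xi\rangle^{s}\hat u_0$ survives, giving the $\hat{H}^{s,r}$ norm.

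Second, I would prove the inhomogeneous (energy) estimate: for $b > \frac{1}{r}$ and $b' \in (b-1,0)$, setting $\theta := 1+b'-b > 0$ and $\psi_T(t)=\psi(t/T)$,
$$ \Big\| \psi_T \int_0^t e^{\mp i(t-t')D}F(t')\,dt' \Big\|_{X^r_{s,b,\pm}} \lesssim T^{\theta}\,\|F\|_{X^r_{s,b',\pm}} \, . $$
This is the Fourier-Lebesgue analogue of the standard $X^{s,b}$ energy inequality: one splits the Duhamel integral into a piece supported away from the characteristic variety, where the negative power $\langle\tau\pm|\xi|\rangle^{b'}$ already controls $\langle\tau\pm|\xi|\rangle^{b}$, and a near-characteristic piece handled by a Taylor expansion of the propagator, with the length-$T$ localization furnishing the positive power $T^{\theta}$. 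The hypothesis $b'\in(b-1,0)$ is exactly what makes $\theta>0$.

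With these two bounds and the assumed nonlinear estimates, the contraction is routine. Put $R := 2C\|u_0\|_{\hat{H}^{s,r}}$, where $C$ is the homogeneous constant, and restrict $\Phi$ to the ball $B_R$ in $X^r_{s,b,\pm}[0,T]$. Combining the linear estimates with $\|N(u)\|_{X^r_{s,b',\pm}} \le c\|u\|^{\alpha}$ gives $\|\Phi(u)\|_{X^r_{s,b,\pm}} \le \frac12 R + C'c\,T^{\theta} R^{\alpha}$, while the Lipschitz hypothesis gives $\|\Phi(u)-\Phi(v)\|_{X^r_{s,b,\pm}} \le 2C'c\,T^{\theta} R^{\alpha-1}\|u-v\|_{X^r_{s,b,\pm}}$. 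Choosing $T=T(\|u_0\|_{\hat{H}^{s,r}})$ so small that $2C'c\,T^{\theta} R^{\alpha-1} \le \frac12$ makes $\Phi$ a contraction of $B_R$ into itself, and Banach's theorem yields the unique fixed point $u$. Persistence of higher regularity follows since the identical contraction runs at higher $s$, and the local Lipschitz dependence $u_0 \mapsto u$ is read off from the same estimates applied to differences.

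I expect the genuinely delicate step to be the inhomogeneous estimate with an honest positive power of $T$. In the $L^2$-based theory the gain of $T^{\theta}$ comes from Hölder in time on an interval of length $T$, but in the $\hat{L}^{r'}$ setting the time variable is dualized and that trick does not transfer directly; one must instead argue through the modulation decomposition above and track the $T$-dependence carefully. This is precisely the content packaged in \cite{G}, and everything else is the standard machinery of the contraction mapping principle.
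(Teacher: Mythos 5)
Your proposal is correct and follows essentially the same route as the paper's source for this result: the paper gives no proof of its own but cites Gr\"unrock \cite{G}, Theorem 1, whose argument is precisely the Duhamel/contraction-mapping scheme you describe, with the homogeneous estimate and the inhomogeneous estimate carrying the gain $T^{1+b'-b}$ as the two linear ingredients. You also correctly identify the only genuinely delicate point, namely obtaining the positive power of $T$ in the $\hat{L}^{r'}$-based setting where the usual H\"older-in-time trick is unavailable; that is exactly what is packaged in \cite{G}.
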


The following proposition relies on estimates given by Foschi-Klainerman \cite{FK}.

\begin{prop}
\label{Prop.1.1}
Let $u$ and $v$ be solutions of the linear wave equation $\Box\, u = \Box \,v =0$ in $\R^{2+1}$ with $u(0)=f$ , $\partial_t u(0)=0$ , $v(0)=g$ , $\partial_t v(0) = 0$. Assume $1 < r \le 2$ , $\alpha_1+\alpha_2-\alpha_0 = \gamma + \frac{1}{r}$ , $\alpha_0 > \frac{1}{r}-\gamma$ , $0 \le \alpha_0 \le \alpha_1,\alpha_2$ , $\alpha_1 + \alpha_2 > \frac{2}{r}$ , $ \gamma \ge \max(\frac{1}{2r},\alpha_1-\frac{1}{r},\alpha_2-\frac{1}{r})$ and $\max(\alpha_1,\alpha_2) \neq \frac{3}{2r}$ . Then the following estimate holds
$$ \|{\mathcal F}(D^{\alpha_0} B^{\gamma}_{\pm}(u,v))\|_{L^{r'}_{\tau \xi}} \lesssim \|\widehat{D^{\alpha_1} f}\|_{L^{r'}_{\xi}} \|\widehat{D^{\alpha_2} g}\|_{L^{r'}_{\xi}} \, , $$
where
$$ \widehat{(B^{\gamma}_{\pm}(f,g))}(\xi) := \int b^{\gamma}_{\pm}(\xi,\eta) \widehat{f}(\eta) \widehat{g}(\xi - \eta) d\eta $$
with 
$$b_+(\xi,\eta) = |\eta| + |\xi - \eta| - |\xi| \quad , \quad b_-(\xi,\eta) = |\xi| - ||\eta|-|\xi-\eta|| \, . $$
\end{prop}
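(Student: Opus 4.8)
The plan is to adapt the $L^2$-based bilinear cone estimates of Foschi--Klainerman to the Fourier--Lebesgue exponent $r'$. First I would reduce to half-waves: since $\partial_t u(0) = \partial_t v(0) = 0$ we have $u = \cos(tD)f = \frac{1}{2}(e^{itD}+e^{-itD})f$ and likewise for $v$, so it suffices to treat the four products $e^{\pm_1 itD}f \cdot e^{\pm_2 itD}g$. For each sign pair the space-time Fourier transform is a convolution of two cone measures, giving
$$ \mathcal F\big(D^{\alpha_0}B^\gamma_\pm(e^{\pm_1 itD}f, e^{\pm_2 itD}g)\big)(\tau,\xi) = |\xi|^{\alpha_0}\int b_\pm^\gamma(\xi,\eta)\,\widehat f(\eta)\,\widehat g(\xi-\eta)\,\delta\big(\tau \mp_1|\eta| \mp_2|\xi-\eta|\big)\,d\eta . $$
The structural point I would exploit is that on the support of the parallel interactions ($\pm_1=\pm_2$) the symbol $b_+(\xi,\eta)$ is exactly the output modulation $\big|\tau\mp|\xi|\big|$, and on the anti-parallel interactions the same is true of $b_-$; thus the null weight $b_\pm^\gamma$ is a modulation weight precisely on the resonant pieces (where the two frequencies are (anti)collinear), while the remaining pieces are transversal and easier.

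Next I would pass to the frequency functions $F = \widehat{D^{\alpha_1}f}$, $G = \widehat{D^{\alpha_2}g}$ and estimate the left-hand norm by duality against $w \in L^r_{\tau\xi}$. Integrating out the $\tau$-delta reduces everything to the trilinear form
$$ \int\!\!\int \frac{|\xi|^{\alpha_0}\,b_\pm^\gamma(\xi,\eta)}{|\eta|^{\alpha_1}\,|\xi-\eta|^{\alpha_2}}\,F(\eta)\,G(\xi-\eta)\,\overline{w\big(\phi_\pm(\xi,\eta),\xi\big)}\,d\eta\,d\xi, \qquad \phi_\pm = \pm_1|\eta|\pm_2|\xi-\eta|, $$
which I must bound by $\|F\|_{L^{r'}}\|G\|_{L^{r'}}\|w\|_{L^r}$.

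For the core estimate I would follow the Foschi--Klainerman geometry. For fixed output $\xi$, changing the time variable to the modulation $s = b_\pm(\xi,\eta)$ foliates the $\eta$-integral over the level sets $\{\phi_\pm(\xi,\eta)\mp|\xi| = s\}$, which are ellipses with foci $0,\xi$ in the parallel case and hyperbola branches in the anti-parallel case; the coarea Jacobian is explicit. The weights $|\eta|^{-\alpha_1}|\xi-\eta|^{-\alpha_2}$ together with the surface measure are then estimated along these curves by Hölder in $L^{r'}$, and the resulting integral in $(s,\xi)$ is closed by a Hardy--Littlewood--Sobolev/fractional-integration bound in the output variable. The hypotheses are exactly the convergence thresholds for this scheme: $0\le\alpha_0\le\alpha_1,\alpha_2$ and $\alpha_1+\alpha_2>\frac2r$ control the transversal and low-frequency regions, $\gamma \ge \max(\frac{1}{2r},\alpha_1-\frac1r,\alpha_2-\frac1r)$ forces the null power to absorb the Jacobian singularity where $b_\pm$ degenerates or one frequency dominates, and the balance $\alpha_1+\alpha_2-\alpha_0=\gamma+\frac1r$ fixes the correct homogeneity (it is the free-wave scaling relation in $\hat H^{s,r}$, the extra $\frac1r$ reflecting the gain from the bilinear cone interaction, whose value is $\frac12$ when $r=2$).

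The main obstacle I anticipate is the loss of $L^2$ orthogonality. In the $L^2$ argument one uses Plancherel and Cauchy--Schwarz on the resonance curves; for $1<r<2$ these must be replaced by the weaker Hausdorff--Young and Hölder inequalities, so the geometric gain of the null symbol has to be used sharply rather than wasted. The delicate regions are where $b_\pm$ vanishes (collinear or anti-collinear frequencies) and where $|\eta|$ and $|\xi-\eta|$ are of very different sizes; there the Jacobian degenerates and only the precise lower bounds on $\gamma$ keep the curve and output integrals finite. This also explains the excluded value $\max(\alpha_1,\alpha_2)=\frac{3}{2r}$, at which the output integral diverges logarithmically; I would treat it, if needed, only after an arbitrarily small loss in $\alpha_0$ or $\gamma$.
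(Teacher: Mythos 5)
Your setup matches the paper's: the reduction to the four half-wave products $e^{\pm_1 itD}f\cdot e^{\pm_2 itD}g$, the representation of the space-time Fourier transform as a $\delta$-convolution over the cone, and the observation that $b_+$ (resp.\ $b_-$) equals $\bigl||\tau|-|\xi|\bigr|$ on the support of the $(+,+)$ (resp.\ $(+,-)$) interaction are all exactly the first steps of the actual proof. But from that point on your proposal has a genuine gap: you never reduce the estimate to a concrete, verifiable quantity, and the closing mechanism you propose is not the right one. The paper does \emph{not} dualize against $w\in L^r_{\tau\xi}$ and does not use any Hardy--Littlewood--Sobolev or fractional-integration bound in the output variable. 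Instead it applies H\"older in $\eta$ for fixed $(\tau,\xi)$, inserting the weights $|\eta|^{-\alpha_1}|\xi-\eta|^{-\alpha_2}$ raised to the power $r$ into the Jacobian factor and $+\alpha_i r'$ against the data, so that after taking the $L^{r'}_{\tau\xi}$-norm the data factor integrates out \emph{exactly} (by Fubini) to $\|\widehat{D^{\alpha_1}f}\|_{L^{r'}}^{r'}\|\widehat{D^{\alpha_2}g}\|_{L^{r'}}^{r'}$, and everything reduces to the uniform bound $\sup_{\tau,\xi} I\lesssim 1$ for
$$ I=\bigl||\tau|-|\xi|\bigr|^{\gamma}|\xi|^{\alpha_0}\Bigl(\int\delta(\tau-|\eta|\mp|\xi-\eta|)\,|\eta|^{-\alpha_1 r}|\xi-\eta|^{-\alpha_2 r}\,d\eta\Bigr)^{1/r}. $$
This is the standard $L^{r'}$ replacement for the Cauchy--Schwarz step, and it is where the ``loss of orthogonality'' you worry about is actually resolved; your proposed coarea-plus-HLS scheme is both unnecessary and unsubstantiated.

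The second, and larger, part of the gap is that the entire quantitative content of the proof --- the verification of $\sup_{\tau,\xi}I\lesssim 1$ --- is replaced in your proposal by the assertion that ``the hypotheses are exactly the convergence thresholds for this scheme.'' In the paper this verification occupies the whole argument: one quotes the explicit Foschi--Klainerman evaluations of the weighted arc integrals (Prop.\ 4.3 for $(+,+)$, Prop.\ 4.5 and Lemma 4.4 for $(+,-)$, the latter split according to $|\eta|+|\xi-\eta|\lessgtr 2|\xi|$), each giving an expression of the form $\tau^{A}\bigl||\tau|-|\xi|\bigr|^{B}$ with $A,B$ depending on whether $\max(\alpha_1,\alpha_2)\gtrless\frac{3}{2r}$ (whence the excluded value $\frac{3}{2r}$), and then checks case by case that the exponents of $\tau$ and $\bigl||\tau|-|\xi|\bigr|$ sum to zero or better using precisely the hypotheses $\alpha_1+\alpha_2-\alpha_0=\gamma+\frac1r$, $\gamma\ge\max(\frac{1}{2r},\alpha_i-\frac1r)$, $\alpha_1+\alpha_2>\frac2r$, together with $\tau\ge|\xi|$ or $|\tau|\le|\xi|$ as appropriate. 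Without carrying out this case analysis you have not shown that the stated hypotheses suffice, nor where each one is used; the proposal as written is a plausible strategy outline rather than a proof.
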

\begin{proof}
We decompose $uv = u_+ v_+ + u_+ v_-+u_- v_+ +u_- v_-$ , where $u_{\pm} (t) = e^{\pm itD} f$ and $v_{\pm}(t) = e^{\pm itD} g$ . It suffices to consider $u_+ v_+$ and $u_+ v_-$ . Using
$$ \tilde{u}_{\pm}(\tau,\xi) = c \delta(\tau \mp |\xi|) \widehat{f}(\xi) \, , \, \tilde{v}_{\pm}(\tau,\xi) = c \delta(\tau \mp |\xi|) \widehat{g}(\xi) $$
we have to estimate
\begin{align}
\nonumber
&\| \int |\xi|^{\alpha_0} b^{\gamma}_{\pm}(\xi,\eta) \delta(\tau - |\eta| \mp |\xi-\eta|) \widehat{f}(\eta) \widehat{g}(\xi - \eta) d\eta\|_{L^{r'}_{\tau \xi}} \\
\label{b}
& = \| \int |\xi|^{\alpha_0} ||\tau|-\xi||^{\gamma} \delta(\tau - |\eta| \mp |\xi-\eta|) \widehat{f}(\eta) \widehat{g}(\xi - \eta) d\eta\|_{L^{r'}_{\tau \xi}} \, ,
\end{align}
where the equality holds by the definition of $b_{\pm}$. By H\"older's inequality we obtain
\begin{align*}
& \left| ||\tau|-|\xi||^{\gamma} |\xi|^{\alpha_0} \int \delta(\tau-|\eta|\mp |\xi-\eta|) \widehat{f}(\eta) \widehat{g}(\xi-\eta) d\eta \right|^{r'} \\
& \lesssim  ||\tau|-|\xi||^{\gamma r'}  |\xi|^{\alpha_0 r'} | \int \delta(\tau-|\eta|\mp |\xi-\eta|) |\eta|^{-\alpha_1 r} |\xi - \eta|^{-\alpha_2 r} d\eta
 |^{\frac{r'}{r}} \\
&\quad \cdot \int |\eta|^{\alpha_1 r'} |\widehat{f}(\eta)|^{r'}
|\xi-\eta|^{\alpha_2 r'}|\widehat{g}(\xi-\eta)|^{r'} d\eta  \, .
\end{align*}
Thus
\begin{align*}
& \left\| ||\tau|-|\xi||^{\gamma} |\xi|^{\alpha_0} \int \delta(\tau-|\eta|\mp |\xi-\eta|) \widehat{f}(\eta) \widehat{g}(\xi-\eta) d\eta \right\|^{r'}_{L^{r'}_{\tau\xi}} \\
& \lesssim \sup_{\tau,\xi} I \, \,
\| \widehat{D^{\alpha_1}f}\|_{L^{r'}} \|\widehat{D^{\alpha_2}g}\|_{L^{r'}} \, ,
\end{align*}
where
$$ I:= ||\tau|-|\xi||^{\gamma}  |\xi|^{\alpha_0} | (\int \delta(\tau-|\eta|\mp |\xi-\eta|) |\eta|^{-\alpha_1 r} |\xi - \eta|^{-\alpha_2 r} d\eta
 )^{\frac{1}{r}} \, . $$
It remains to show
$ \sup_{\tau,\xi} I \lesssim 1 \, . $ Consider first the  (+,+)-case. By \cite{FK}, Prop. 4.3 we obtain
$$ \int \delta(\tau-|\eta|-|\xi - \eta|) |\eta|^{-\alpha_1 r} |\xi - \eta|^{-\alpha_2 r} d\eta  \sim \tau^A ||\tau|-|\xi||^B \, $$
where $A=\max(\alpha_1 r,\alpha_2r, \frac{3}{2}) - \alpha_1 r - \alpha_2 r$ and $B=1 -\max(\alpha_1 r,\alpha_2r, \frac{3}{2}) \, .$
Let w.l.o.g. $\alpha_1 \ge \alpha_2$.

If $\alpha_1 < \frac{3}{2r}$ we obtain $A= \frac{3}{2} - (\alpha_1+\alpha_2)r$ , $B=-\half$. Thus  we obtain
$$
I^r \lesssim |\xi|^{\alpha_0 r} ||\tau|-|\xi||^{\gamma r} \tau^{\frac{3}{2}-(\alpha_1 + \alpha_2)r} ||\tau|-|\xi||^{-\half} 
 \lesssim \tau^{\alpha_0 r + (\gamma r - \half)+\frac{3}{2}-(\alpha_1+\alpha_2)r} = 1 \, ,
$$
where we used $\tau = |\xi - \eta| + |\eta| \ge |\xi|$ , $\gamma \ge \frac{1}{2r}$ and $ \alpha_1+\alpha_2-\alpha_0 = \gamma + \frac{1}{r} $.

If $\alpha_1 > \frac{3}{2r}$ we obtain $A=-\alpha_2 r$ and $B=1-\alpha_1 r$, so that
$$ I^r \lesssim |\xi|^{\alpha_0 r} ||\tau|-|\xi||^{\gamma r +1-\alpha_1 r} \tau^{-\alpha_2 r} \lesssim \tau^{\alpha_0 r + \gamma r +1-(\alpha_1 + \alpha_2)r} = 1 \, ,$$
under our assumption $\gamma \ge \alpha_1-\frac{1}{r}$ .

Next we consider the (+,--) - case. We distinguish two cases:\\
{\bf a.} $|\eta| + |\xi-\eta| \le 2|\xi|$ . By \cite{FK}, Prop. 4.5 we obtain
$$ \int_{||\eta|+|\xi-\eta| \le 2|\xi|} \delta(\tau-|\eta|+|\xi-\eta|) |\eta|^{-\alpha_1 r} |\xi - \eta|^{-\alpha_2 r} d\eta \sim |\xi|^A ||\xi|-|\tau||^B \, . $$
Here $A= \max(\alpha_2 r,\frac{3}{2}) - (\alpha_1+\alpha_2)r$ and $B=1- \max(\alpha_2 r,\frac{3}{2})$.

If $\alpha_2 < \frac{3}{2r}$ we have $A=\frac{3}{2}-(\alpha_1+\alpha_2)r$ and $B= -\half$, so that
$$I^r  \lesssim |\xi|^{\alpha_0 r} ||\tau|-|\xi||^{\gamma r} |\xi|^{\frac{3}{2}-(\alpha_1+\alpha_2)r} ||\tau|-|\xi||^{-\half}  
 \lesssim |\xi|^{\alpha_0 r + \gamma r +1-(\alpha_1+\alpha_2)r} = 1 \, ,$$
where we used $|\tau| =||\eta|-|\xi-\eta|| \le |\xi|$ , $\gamma \ge \frac{1}{2r}$ and $\alpha_1+\alpha_2-\alpha_0 = \gamma + \frac{1}{r}$ .

If $\alpha_2 > \frac{3}{2r}$ we have $A=-\alpha_1 r$ and $B= 1-\alpha_2 r$. Thus
$$I^r  \lesssim |\xi|^{\alpha_0 r} ||\tau|-|\xi||^{\gamma r} |\xi|^{-\alpha_1 r} ||\tau|-|\xi||^{1-\alpha_2 r}  
 \lesssim |\xi|^{\alpha_0 r + \gamma r +1 -\alpha_2 r- \alpha_1 r} = 1 \, ,$$
using $|\tau| \le |\xi|$ , $\gamma \ge \alpha_2 - \frac{1}{r}$ and $\alpha_1+\alpha_2-\alpha_0 = \gamma + \frac{1}{r}$ . \\
{\bf b.} $|\eta|+|\xi-\eta| \ge 2|\xi|$ . By \cite{FK}, Lemma 4.4 we obtain : 
\begin{align*}
 &\int_{||\eta|+|\xi-\eta| \ge 2|\xi|} \delta(\tau-|\eta|+|\xi-\eta|) |\eta|^{-\alpha_1 r} |\xi - \eta|^{-\alpha_2 r} d\eta \\
& \sim (|\xi|^2-\tau^2)^{-\half} \int_2^{\infty} (|\xi|x+\tau)^{-\alpha_1 r} (|\xi|x-\tau)^{-\alpha_2 r} (|\xi|^2 x^2 - \tau^2)(x^2 -1)^{-\half} dx \\
& \sim (|\xi|^2-\tau^2)^{-\half} \int_2^{\infty} (x+\frac{\tau}{|\xi|})^{-\alpha_1 r+1} (x-\frac{\tau}{|\xi|})^{-\alpha_2 r+1} (x^2 -1)^{-\half} dx \,|\xi|^{-(\alpha_1+\alpha_2)r+2} \, .
\end{align*}
The lower limit of the integral can be chosen as $2$ by inspection of the proof of \cite{FK}, as already \cite{GN} remarked. If we now use our asumption $\alpha_1+\alpha_2 > \frac{2}{r}$ and using $|\tau| \le |\xi|$ the integral is obviously bounded. This implies
$$ I^r \lesssim |\xi|^{\alpha_0 r} ||\tau|-|\xi||^{\gamma r} \frac{|\xi|^{-(\alpha_1+\alpha_2)r+2}}{||\tau|-|\xi||^{\half} (|\tau|+|\xi|)^{\half}} \lesssim 1 
  $$
by our assumptions $\gamma \ge \frac{1}{2r}$ and $\alpha_1+\alpha_2-\alpha_0 = \gamma + \frac{1}{r}$ , and $|\tau| \le |\xi|$ .
\end{proof}

\begin{Cor}
\label{Cor.2}
Assume $1 < r \le 2$ , $\alpha_1+\alpha_2-\alpha_0 \ge \gamma + \frac{1}{r}$ , $\alpha_0 > \frac{1}{r}-\gamma$ , $0 \le \alpha_0 \le \alpha_1,\alpha_2$ , $\alpha_1 + \alpha_2 > \frac{2}{r}$ , $ \gamma \ge \max(\frac{1}{2r},\alpha_1-\frac{1}{r},\alpha_2-\frac{1}{r})$ and $\max(\alpha_1,\alpha_2) \neq \frac{3}{2r}$. 
Then the following estimate holds for $b > \frac{1}{r}$:
$$ \|D^{\alpha_0} B^{\gamma}_{\pm} (u,v)\|_{X^r_{0,0}} = \|{\mathcal F} (D^{\alpha_0} B^{\gamma}_{\pm} (u,v)\|_{L^{r'}_{\tau \xi}} \lesssim \|u\|_{X^r_{\alpha_1,b}} \|v\|_{X^r_{\alpha_2,b}} \, . $$
\end{Cor}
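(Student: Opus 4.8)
The plan is to recognize the corollary as the Bourgain-space ($X^r_{s,b}$) version of the free-wave bilinear bound already established in Proposition \ref{Prop.1.1}, and to deduce it from the transfer principle of Proposition \ref{Prop.0.1}. The bilinear operator in question is $T=D^{\alpha_0}B^\gamma_\pm$, a space-time Fourier multiplier with symbol $|\xi|^{\alpha_0}b^\gamma_\pm(\xi,\eta)$ depending only on the spatial frequencies, hence time-translation invariant and of the type to which Proposition \ref{Prop.0.1} applies. The first routine point is the identification of the output norm: for $p=q=r$ one has $\|w\|_{\hat{L}^r_t(\hat{L}^r_x)}=\|\tilde w\|_{L^{r'}_\tau(L^{r'}_\xi)}=\|\tilde w\|_{L^{r'}_{\tau\xi}}=\|w\|_{X^r_{0,0}}$ by Fubini, so the left-hand side of the corollary is exactly an $\hat{L}^r_t(\hat{L}^r_x)$-norm. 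Thus it suffices to produce, for each sign combination, a free-wave estimate of the form demanded by Proposition \ref{Prop.0.1} with $p=q=r$, $s_1=\alpha_1$, $s_2=\alpha_2$.

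Before invoking Proposition \ref{Prop.1.1} I would reduce the scaling inequality to the equality case it covers. Assume without loss of generality $\alpha_1\ge\alpha_2$, set $\delta:=(\alpha_1+\alpha_2-\alpha_0)-(\gamma+\frac{1}{r})\ge 0$ and $\tilde\alpha_1:=\alpha_1-\delta$, so that $\tilde\alpha_1+\alpha_2-\alpha_0=\gamma+\frac{1}{r}$. Then $(\alpha_0,\tilde\alpha_1,\alpha_2,\gamma)$ satisfies every hypothesis of Proposition \ref{Prop.1.1}: from $\tilde\alpha_1=\alpha_0-\alpha_2+\gamma+\frac{1}{r}$ one gets $\tilde\alpha_1\ge\alpha_0$ precisely because $\gamma\ge\alpha_2-\frac{1}{r}$, and $\tilde\alpha_1+\alpha_2=\alpha_0+\gamma+\frac{1}{r}>\frac{2}{r}$ precisely because $\alpha_0>\frac{1}{r}-\gamma$; the domination $\gamma\ge\max(\frac{1}{2r},\tilde\alpha_1-\frac{1}{r},\alpha_2-\frac{1}{r})$ is inherited from $\tilde\alpha_1\le\alpha_1$, and $0\le\alpha_0\le\tilde\alpha_1,\alpha_2$ holds. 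Proposition \ref{Prop.1.1} then delivers the free-wave bound with $\|\widehat{D^{\tilde\alpha_1}f}\|_{L^{r'}}\|\widehat{D^{\alpha_2}g}\|_{L^{r'}}$ on the right, and since $0\le\tilde\alpha_1\le\alpha_1$ we have $|\xi|^{\tilde\alpha_1}\le\langle\xi\rangle^{\alpha_1}$, so the right-hand side is dominated by $\|f\|_{\hat{H}^{\alpha_1,r}}\|g\|_{\hat{H}^{\alpha_2,r}}$. This is exactly the inhomogeneous free-wave hypothesis of the transfer principle, which then yields the claimed estimate for $b>\frac{1}{r}$.

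The one point demanding care is the sign bookkeeping that the transfer principle asks for, namely the free-wave bound for all four sign choices of $e^{\pm_1 itD}f$, $e^{\pm_2 itD}g$. The null symbol $b^\gamma_\pm$ collapses to the modulation weight $||\tau|-|\xi||^\gamma$ only on the matched support — the $(+,+)$ interaction for $B^\gamma_+$ and the $(+,-)$ interaction for $B^\gamma_-$ — which is exactly what the two cases in the proof of Proposition \ref{Prop.1.1} supply, the remaining $(-,-)$ and $(-,+)$ interactions following from the conjugation symmetry already used there. I would therefore apply the transfer principle to the $\pm$-cone pieces into which the symmetric weight $\langle|\tau|-|\xi|\rangle$ splits an $X^r_{s,b}$ function (the $\tau>0$ and $\tau<0$ parts), pairing $B^\gamma_+$ with same-cone and $B^\gamma_-$ with opposite-cone factors so that only the matched combinations enter.

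I expect this matching, rather than any new analysis, to be the main obstacle, the whole heavy analytic input (the Foschi–Klainerman convolution asymptotics) being already contained in Proposition \ref{Prop.1.1}. Coupled with it is the need to keep clear of the excluded exponent $\max(\alpha_1,\alpha_2)=\frac{3}{2r}$, where those asymptotics degenerate: when $\delta=0$ the reduced exponents coincide with the original ones and the value is ruled out by hypothesis, while when $\delta>0$ one has the freedom to distribute the surplus $\delta$ between $\alpha_1$ and $\alpha_2$ (each staying $\ge\alpha_0$) so as to keep $\max(\tilde\alpha_1,\alpha_2)\neq\frac{3}{2r}$; this is possible for all but isolated splittings, and the remaining value is recovered by letting the splitting tend to it, the constant in Proposition \ref{Prop.1.1} being locally uniform off the exceptional locus.
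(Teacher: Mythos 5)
Your proposal is correct and follows exactly the paper's route: the paper's entire proof of this corollary is the single sentence that it follows from the transfer principle (Prop.~\ref{Prop.0.1}) applied to Prop.~\ref{Prop.1.1}. You have merely supplied the details the paper leaves implicit — the identification $\|\cdot\|_{X^r_{0,0}}=\|\cdot\|_{\hat{L}^r_t(\hat{L}^r_x)}$, the reduction of the inequality $\alpha_1+\alpha_2-\alpha_0\ge\gamma+\tfrac1r$ to the equality case of Prop.~\ref{Prop.1.1} (which works, since the hypothesis $\gamma\ge\max(\alpha_1,\alpha_2)-\tfrac1r$ forces $\delta\le\min(\alpha_1,\alpha_2)-\alpha_0$, so the redistribution stays admissible and the exceptional exponent can be avoided whenever $\delta>0$), and the matching of $B^\gamma_+$ with same-cone and $B^\gamma_-$ with opposite-cone interactions — all of which are accurate.
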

\begin{proof}
This is a consequence of the transfer principle Prop. \ref{Prop.0.1}.
\end{proof}

In the following we repeatedly use the following bilinear estimates in wave-Sobolev spaces $H^{s,b}$ , which were proven by d'Ancona, Foschi and Selberg in the two-dimensional case $n=2$ in \cite{AFS} in a more general form which include many limit cases which we do not need.
\begin{prop}
\label{AFS}
Let $n=2$. The estimate
$$\|uv\|_{H^{-s_0,-b_0}} \lesssim \|u\|_{H^{s_1,b_1}} \|v\|_{H^{s_2,b_2}} $$ 
holds, provided the following conditions hold:
\begin{align*}
\nonumber
& b_0 + b_1 + b_2 > \frac{1}{2} \\
\nonumber
& b_0 + b_1 \ge 0 \\
\nonumber
& b_0 + b_2 \ge 0 \\
\nonumber
& b_1 + b_2 \ge 0 \\
\nonumber
&s_0+s_1+s_2 > \frac{3}{2} -(b_0+b_1+b_2) \\
\nonumber
&s_0+s_1+s_2 > 1 -\min(b_0+b_1,b_0+b_2,b_1+b_2) \\
\nonumber
&s_0+s_1+s_2 > \frac{1}{2} - \min(b_0,b_1,b_2) \\
\nonumber
&s_0+s_1+s_2 > \frac{3}{4} \\
 &(s_0 + b_0) +2s_1 + 2s_2 > 1 \\
\nonumber
&2s_0+(s_1+b_1)+2s_2 > 1 \\
\nonumber
&2s_0+2s_1+(s_2+b_2) > 1 \\
\nonumber
&s_1 + s_2 \ge \max(0,-b_0) \\
\nonumber
&s_0 + s_2 \ge \max(0,-b_1) \\
\nonumber
&s_0 + s_1 \ge \max(0,-b_2)   \, .
\end{align*}
\end{prop}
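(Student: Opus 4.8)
Since $H^{s,b}=X^2_{s,b}$, the plan is to reduce the claimed bilinear bound to a trilinear convolution estimate on the Fourier side and then to the geometry of the light cone, in the spirit of the proof of Proposition \ref{Prop.1.1}.

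\textbf{Step 1 (Duality).} First I would dualize. Since the dual space of $H^{-s_0,-b_0}$ is $H^{s_0,b_0}$, the estimate $\|uv\|_{H^{-s_0,-b_0}}\lesssim\|u\|_{H^{s_1,b_1}}\|v\|_{H^{s_2,b_2}}$ is equivalent to the trilinear estimate
$$ \Big| \int_{\R \times \R^2} u\,v\,w\;dx\,dt \Big| \lesssim \|u\|_{H^{s_1,b_1}}\|v\|_{H^{s_2,b_2}}\|w\|_{H^{s_0,b_0}} \, . $$
Writing $u_1=u$, $u_2=v$, $u_0=w$ and setting $G_i(\tau,\xi)=\langle\xi\rangle^{s_i}\langle|\tau|-|\xi|\rangle^{b_i}\,\tilde u_i(\tau,\xi)$, so that $\|u_i\|_{H^{s_i,b_i}}=\|G_i\|_{L^2_{\tau\xi}}$, Plancherel turns the left side into a convolution integral over the resonance set $\tau_0=\tau_1+\tau_2$, $\xi_0=\xi_1+\xi_2$, weighted by $\prod_i\langle\xi_i\rangle^{-s_i}\langle|\tau_i|-|\xi_i|\rangle^{-b_i}$. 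The task is then to bound this form by $\prod_i\|G_i\|_{L^2}$.

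\textbf{Step 2 (Dyadic decomposition and the overlap estimate).} Next I would decompose dyadically in the frequencies $N_i\sim\langle\xi_i\rangle$ and the modulations $L_i\sim\langle|\tau_i|-|\xi_i|\rangle$ for $i=0,1,2$, reducing matters to a uniform bound on each dyadic block, with an explicit power of the $N_i$ and $L_i$ that must be summable. By Cauchy--Schwarz the block estimate is controlled by the measure of the overlap of three cone-slabs, namely the region where each $(\tau_i,\xi_i)$ lies within distance $\sim L_i$ of the characteristic cone $|\tau_i|=|\xi_i|$ at frequency $\sim N_i$, subject to the convolution constraint. The analytic heart is the bound for this overlap, which is exactly the kind of concentration estimate furnished by Foschi--Klainerman \cite{FK} (Prop. 4.3, 4.5 and Lemma 4.4, already exploited in the proof of Proposition \ref{Prop.1.1}). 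The overlap is governed by the transversality of the three cones and by $\min(N_i)$ together with the $L_i$; the null/transversality structure is what produces the decisive gains. Feeding these bounds back and summing the resulting geometric series in $N_i$ and $L_i$ yields the claim.

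\textbf{Main obstacle.} The genuinely laborious part is the case analysis. One must split into the frequency-interaction regimes (high--high$\,\to\,$low, high--low$\,\to\,$high, and their subcases according to which modulation $L_i$ is largest) and check in each regime that the dyadic sums converge. Each of the many hypotheses is sharp for one such regime: the balanced condition $s_0+s_1+s_2>\tfrac34$ saturates in the fully resonant high--high interaction of two transversal cones, whereas the mixed conditions such as $(s_0+b_0)+2s_1+2s_2>1$ are what control the configurations in which a single modulation dominates. Matching every regime to its corresponding inequality, and dealing with the endpoint and logarithmic cases that the strict inequalities are designed to avoid, is where the real work lies. Since the sharp form with all limit cases is precisely the content of \cite{AFS}, I would ultimately appeal to that reference for the two-dimensional estimate in the stated range.
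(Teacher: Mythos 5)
The paper gives no proof of this proposition at all: it is quoted as a known result, with the remark that it was established (in a more general form including limit cases) by d'Ancona, Foschi and Selberg in \cite{AFS}. Your proposal sketches a plausible outline of the underlying method (duality, dyadic decomposition, cone-overlap estimates) but ultimately defers to the same reference, so it is consistent with, and essentially the same as, what the paper does.
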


\begin{lemma}
\label{Lemma1.3}
Assume $1\le r\le 2$ , $\alpha_1,\alpha_2 \ge \alpha_0 \ge 0$ ,  $b \ge \gamma$, and  $\alpha_1+\alpha_2-\alpha_0 \ge \gamma + \frac{1}{r}$, and $\gamma \ge \frac{1}{2r}$, where at least one of the last two inequalities is strict, and $b > \frac{1}{r}$ . Then the following estimate applies: 
$$ \|D^{\alpha_0} ((D^{\gamma}_- u) v))\|_{X^r_{0,0}} \lesssim \|u\|_{X^r_{\alpha_1,b}} \|v\|_{X^r_{\alpha_2,b}} \, , $$
where $D_-$ is the operator with Fourier symbol $\,||\tau|-|\xi||$.
\end{lemma}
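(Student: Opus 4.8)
The decisive observation is that $D_-$, whose Fourier symbol is $||\tau|-|\xi||$, carries a pure modulation (hyperbolic) weight; unlike the null-form operator $B^\gamma_\pm$ of Proposition \ref{Prop.1.1} it involves no genuine null structure, and the plan is to absorb it into the modulation part of the norm. Since $||\tau|-|\xi||^\gamma\le\langle|\tau|-|\xi|\rangle^\gamma$ and $b\ge\gamma$, one has $\|D^\gamma_-u\|_{X^r_{\alpha_1,\,b-\gamma}}\le\|u\|_{X^r_{\alpha_1,b}}$, and using $\alpha_0\ge0$ to bound $|\xi|^{\alpha_0}\le\langle\xi\rangle^{\alpha_0}$ the whole lemma reduces to the bilinear wave-Sobolev estimate
$$ \|Uv\|_{X^r_{\alpha_0,0}}\lesssim\|U\|_{X^r_{\alpha_1,\,b-\gamma}}\,\|v\|_{X^r_{\alpha_2,b}}, $$
applied afterwards with $U=D^\gamma_-u$.

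For $r=2$ this is a direct instance of Proposition \ref{AFS} with
$$ (s_0,b_0)=(-\alpha_0,0),\qquad(s_1,b_1)=(\alpha_1,\,b-\gamma),\qquad(s_2,b_2)=(\alpha_2,b). $$
I would then check its fourteen conditions against the hypotheses. The conditions on the $b_i$ hold because $b>\half$ and $b\ge\gamma$ (so $b_0+b_1+b_2=2b-\gamma>\half$). The four conditions of the form $s_0+s_1+s_2>\dots$ follow from $s_0+s_1+s_2=\alpha_1+\alpha_2-\alpha_0\ge\gamma+\half$ once $b>\half$ is used, the strictness being supplied by the hypothesis that one of $\gamma\ge\frac{1}{2r}$, $\alpha_1+\alpha_2-\alpha_0\ge\gamma+\frac{1}{r}$ is strict. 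The three ``product'' conditions follow from $\alpha_1+\alpha_2-\alpha_0\ge\gamma+\half$ together with $2\gamma\ge\half$ and $b>\half$, and the three conditions $s_i+s_j\ge\max(0,-b_k)$ collapse to $\alpha_1,\alpha_2\ge\alpha_0$ precisely because $b-\gamma\ge0$.

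For the Fourier--Lebesgue range $1\le r<2$ I would run the same reduction and prove the product estimate by the Foschi--Klainerman method already used for Proposition \ref{Prop.1.1}, now retaining the two hyperbolic weights $\langle|\sigma|-|\eta|\rangle^{\,b-\gamma}$ and $\langle|\tau-\sigma|-|\xi-\eta|\rangle^{\,b}$ on the two inputs instead of collapsing them onto the light cones. After a dyadic decomposition of $U$ and $v$ in their modulations and an application of H\"older's inequality (as in the proof of Proposition \ref{Prop.1.1}), each block reduces to a convolution integral over a shifted cone, bounded by the same estimates of \cite{FK}; the dyadic series then sums under $b-\gamma\ge0$, $b>\frac{1}{r}$ and $\alpha_1+\alpha_2-\alpha_0\ge\gamma+\frac{1}{r}$ with the stated strictness. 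This is nothing but the Fourier--Lebesgue analogue of Proposition \ref{AFS} (the generalisation of \cite{AFS} referred to in the introduction), whose hypotheses again reduce to those of the lemma.

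The main obstacle, and the reason the estimate follows neither from Corollary \ref{Cor.2} nor from the transfer principle, is that the leftover modulation exponent $b-\gamma$ on the factor $U=D^\gamma_-u$ may be smaller than $\frac{1}{r}$. Proposition \ref{Prop.0.1} then cannot be applied to that factor, so it may not be replaced by a free wave and the genuine off-cone integrals must be estimated directly. This is exactly what makes the lemma more robust than Corollary \ref{Cor.2}: because the gain comes straight from the modulation of $u$ rather than from the null symbol $b_\pm$, the degenerate regime $\max(\alpha_1,\alpha_2)>\frac{3}{2r}$ of Proposition \ref{Prop.1.1}, which there forced the extra conditions $\gamma\ge\alpha_i-\frac{1}{r}$, is replaced by harmless modulation integrals; the single hypothesis $b\ge\gamma$ (which keeps $b-\gamma\ge0$) takes over their role. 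Carrying out this off-cone analysis and verifying convergence of the dyadic sum at the borderline is where the real work lies.
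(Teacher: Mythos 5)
Your opening reduction --- using $b\ge\gamma$ to absorb $D^{\gamma}_-$ into the modulation weight and reducing the lemma to the product estimate $\|Uv\|_{X^r_{\alpha_0,0}}\lesssim\|U\|_{X^r_{\alpha_1,b-\gamma}}\|v\|_{X^r_{\alpha_2,b}}$ --- is exactly the paper's first step, and your verification of the case $r=2$ through Proposition \ref{AFS} is correct (the paper does the same after first moving $\alpha_0$ onto $u$ by the fractional Leibniz rule). Your closing observation, that the gain here comes from the modulation of $u$ rather than from the null symbol, so that the conditions $\gamma\ge\alpha_i-\frac{1}{r}$ and $\max(\alpha_1,\alpha_2)\neq\frac{3}{2r}$ of Proposition \ref{Prop.1.1} are not needed, is also accurate.

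The gap is in the range $1\le r<2$, which is where the real content of the lemma lies. You assert that a dyadic decomposition of $U$ and $v$ in their modulations, combined with the Foschi--Klainerman cone estimates, ``sums under $b-\gamma\ge 0$''. This fails at and near the endpoint $\gamma=b$: there $U=D^{\gamma}_-u$ retains no modulation weight at all, so the dyadic series over the modulation of $U$ has no decay from the weight, while the individual shell estimates do not decay in that parameter either; the sum diverges. More generally, the Fourier--Lebesgue product estimate that is actually available (Proposition \ref{Prop.2.3}, i.e.\ Grigoryan--Tanguay) requires both modulation exponents to exceed $\frac{1}{2r}$, and $b_1=b-\gamma$ drops below this threshold as soon as $\gamma>b-\frac{1}{2r}$; the ``Fourier--Lebesgue analogue of Proposition \ref{AFS}'' you invoke is precisely what is not available off the shelf in that regime. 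The paper closes this hole by a different route: Proposition \ref{Prop.2.3} is applied only for $\gamma=\frac{1}{2r}$ and $\gamma=\frac{1}{2r}+$ (where $b-\gamma>\frac{1}{2r}$ does hold); the opposite endpoint $\gamma=b$ is treated at $r=1$ by a bare H\"older/Young argument on the Fourier side, $\|\widetilde{uv}\|_{L^{\infty}_{\tau\xi}}\lesssim\|\tilde u\|_{L^{\infty}_{\tau}L^{q_1}_{\xi}}\|\tilde v\|_{L^{1}_{\tau}L^{q_2}_{\xi}}$, which uses $b>\frac{1}{r}=1$ for the $\tau$-integration and $\alpha_1+\alpha_2-\alpha_0>2$ for the $\xi$-integration; and the full range $1<r\le2$, $\frac{1}{2r}\le\gamma\le b$ is then reached by two interpolations (in $r$ at $\gamma=b$, then in $\gamma$). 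Without some replacement for this endpoint analysis your argument does not cover $\gamma$ close to $b$ --- a case that is actually used later in the paper, e.g.\ with $\gamma=b$ in the quintic estimate via Proposition \ref{Prop.1.5}.
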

\begin{proof}
By the fractional Leibniz rule and symmetry this reduces to 
$$\| (D^{\gamma}_- u) v\|_{X^r_{0,0}} \lesssim \|u\|_{X^r_{\alpha_1-\alpha_0,b}} \|v\|_{X^r_{\alpha_2,b}} \, .$$
This follows from 
$$\| u v\|_{X^r_{0,0}} \lesssim \|u\|_{X^r_{\alpha_1-\alpha_0,b-\gamma}} \|v\|_{X^r_{\alpha_2,b}} \, .$$

For $r=2$ we apply Prop. \ref{AFS} with parameters $s_0=0$ , $b_0=0$ , $s_1=\alpha_1-\alpha_0$, $b_1=b-\gamma$ , $s_2=\alpha_2$ , $b_2=b$ , so that $s_0+s_1+s_2 =\alpha_1+\alpha_2-\alpha_0 > \gamma + \half \ge \frac{3}{4} $ for $\gamma \ge \frac{1}{4}$ or $s_0+s_1+s_2 \ge \gamma + \half > \frac{3}{4}$ for $\gamma > \frac{1}{4}$ , and $s_0+s_1+s_2 > 1-b+\gamma$ for $\gamma \ge \frac{1}{4}$ and $b > \half$ . Moreover $s_0+s_1+s_2+s_0+s_2+b_1 \ge \gamma + \half + \alpha_2 + b-\gamma = \half + \alpha_2 + b > 1$ for $b>\half$ , $\alpha_2 \ge 0$. The remaining conditions which are needed for an application of \cite{AFS} are also easily checked.

In the case $\gamma = \frac{1}{2r}$ we obtain the claimed estimate by Prop. \ref{Prop.2.3} below with $b_1=b-\gamma > \frac{1}{2r}$ , $b_2=b > \frac{1}{r}$ , so that $b_1+b_2 > \frac{3}{2r}$. We also need the assumption $\alpha_1-\alpha_0+\alpha_2 > \gamma + \frac{1}{r}= \frac{3}{2r}$ .

In the case $\gamma = \frac{1}{2r}+$ an application of Prop. \ref{Prop.2.3} with $b_1 = b-\gamma > \frac{1}{2r}$ for $b=\frac{1}{r}++$ gives $b_1+b_2 > \frac{3}{2r}$  using the assumption  $\alpha_1-\alpha_0+\alpha_2 \ge \gamma + \frac{1}{r}> \frac{3}{2r}$ .

Next we consider the case $\gamma = b$ . We start with the subcase $r=1$ , so that $\alpha_1-\alpha_0+\alpha_2 \ge \gamma + \frac{1}{r} = b+\frac{1}{r} >\frac{2}{r}=2$ , because $b > \frac{1}{r}=1$. We obtain
\begin{align*}
 \|uv\|_{X^1_{0,0}} = &\,\|\tilde{uv}\|_{L^{\infty}_{\tau \xi}} \lesssim \|\tilde{u}\|_{L^{\infty}_\tau L^{q_1}_\xi} \|\tilde{v}\|_{L^{1}_\tau L^{q_2}_\xi} \\
\lesssim & \,\|\langle \xi \rangle^{-\alpha_1+\alpha_0} \|_{L^{\infty}_{\tau} L^{q_1}_{\xi}} \|\tilde{u}\|_{X^1_{\alpha_1-\alpha_0,0}} 
  \|\langle \xi \rangle^{-\alpha_2} \langle |\tau|-|\xi| \rangle^{-b} \|_{L^{1}_{\tau} L^{q_2}_{\xi}} \|\tilde{v}\|_{X^1_{\alpha_2,b}} \\
\lesssim &\, \|\tilde{u}\|_{X^1_{\alpha_1-\alpha_0,0}} \|\tilde{v}\|_{X^1_{\alpha_2,b}} \,.
\end{align*}
We choose  $\frac{1}{q_1}+\frac{1}{q_s} = 1 $ which fulfill the conditions $(\alpha_1-\alpha_0) q_1 > 2$ , $\alpha_2 q_2 > 2$ . This requires the condition  $\alpha_1+\alpha_2-\alpha_0 > 2$ , which is fulfilled.

Interpolation with the case $r=2$ , $\gamma = b$ gives the claimed result in the case
$1 < r \le 2$ , $\gamma =b$ ,  and another interpolation with the case $\gamma = \frac{1}{2r}+$ , which was treated  before, gives the final result in the case $1 < r \le 2$ , $b \ge \gamma > \frac{1}{2r}$ , if $\alpha_1+\alpha_2-\alpha_0 \ge \gamma + \frac{1}{r}$ .
\end{proof}

\begin{prop}
\label{Prop.1.4}
Assume $1 < r \le 2$ ,  $\alpha_0 > \frac{1}{r}-\gamma$ , $\alpha_1 + \alpha_2 > \frac{2}{r}$ , $0 \le \alpha_0 \le \alpha_1,\alpha_2$, $\max(\alpha_1,\alpha_2) \neq \frac{3}{2r}$ , $b \ge \gamma$ ,  and either $\alpha_1+\alpha_2-\alpha_0 > \gamma + \frac{1}{r}$ and $ \gamma \ge \frac{1}{2r}$ , or $\alpha_1+\alpha_2-\alpha_0 \ge \gamma + \frac{1}{r}$ and $ \gamma > \frac{1}{2r}$ . Moreover $ \gamma \ge \max(\alpha_1-\frac{1}{r},\alpha_2-\frac{1}{r})$ , $b > \frac{1}{r}$ .
Then the following estimate holds:
$$ \|D^{\alpha_0} D^{\gamma}_- (uv)\|_{X^r_{0,0}} \lesssim \|u\|_{X^r_{\alpha_1,b}} \|v\|_{X^r_{\alpha_2,b}} \, . $$
\end{prop}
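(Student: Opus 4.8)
The plan is to dominate the output modulation weight $||\tau|-|\xi||^\gamma$ by the null-form symbol $b^\gamma_\pm(\xi,\eta)$ together with the two input modulations, and then to invoke Corollary \ref{Cor.2} and Lemma \ref{Lemma1.3}. First I would split $u=u_++u_-$ and $v=v_++v_-$ according to the sign of the temporal frequency, so that $\tilde u_\pm$ is the restriction of $\tilde u$ to $\pm\tau>0$ (and similarly for $v$); since such a restriction does not increase the $X^r_{\alpha_1,b}$-norm, it suffices to bound each of the four products $u_{\pm_1}v_{\pm_2}$ separately. On the support of $\tilde u_{\pm_1}(\tau_1,\eta)\,\tilde v_{\pm_2}(\tau-\tau_1,\xi-\eta)$ the two input modulations are $|\sigma_1|=||\tau_1|-|\eta||$ and $|\sigma_2|=||\tau-\tau_1|-|\xi-\eta||$, and a short computation using $\tau=\tau_1+(\tau-\tau_1)$ and the triangle inequality gives the pointwise bound
$$ ||\tau|-|\xi|| \;\le\; b_\pm(\xi,\eta) + |\sigma_1| + |\sigma_2| \, , $$
with $b_+$ in the equal-sign cases ($\pm_1=\pm_2$) and $b_-$ in the mixed cases ($\pm_1\neq\pm_2$). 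Indeed, for $(+,+)$ one has $|\tau|-|\xi|=b_+(\xi,\eta)+\sigma_1+\sigma_2$ directly, while in the mixed case one writes $|\tau|-|\xi|=|(|\eta|-|\xi-\eta|)+(\sigma_1-\sigma_2)|-|\xi|$ and uses $|\xi|-||\eta|-|\xi-\eta||=b_-(\xi,\eta)\ge 0$.

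Since $\gamma>0$, the displayed bound yields $||\tau|-|\xi||^\gamma\lesssim b_\pm(\xi,\eta)^\gamma+|\sigma_1|^\gamma+|\sigma_2|^\gamma$. Because the factor $||\tau|-|\xi||^\gamma$ is constant in the convolution variables $(\tau_1,\eta)$, I would pull it inside the defining integral of $\widehat{D^{\alpha_0}D^\gamma_-(u_{\pm_1}v_{\pm_2})}$ and dominate the integrand pointwise. This splits the term into three nonnegative pieces: one in which the weight $b_\pm^\gamma$ appears as the bilinear symbol, and two in which a factor $|\sigma_1|^\gamma$, respectively $|\sigma_2|^\gamma$, is absorbed into $u$, respectively $v$ (this factor is precisely the symbol of $D^\gamma_-$ acting on that factor). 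As the estimates below are insensitive to replacing the Fourier transforms by their absolute values, it is enough to bound these three expressions.

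The first piece is $\|D^{\alpha_0}B^\gamma_\pm(u_{\pm_1},v_{\pm_2})\|_{X^r_{0,0}}$, which is controlled by Corollary \ref{Cor.2}; its hypotheses — in particular $\alpha_1+\alpha_2-\alpha_0\ge\gamma+\frac1r$, $\gamma\ge\max(\frac{1}{2r},\alpha_1-\frac1r,\alpha_2-\frac1r)$, $\alpha_0>\frac1r-\gamma$, $\alpha_1+\alpha_2>\frac2r$, and $\max(\alpha_1,\alpha_2)\neq\frac{3}{2r}$ — are all among the assumptions of the present proposition. The two remaining pieces are of the form $\|D^{\alpha_0}((D^\gamma_- u)v)\|_{X^r_{0,0}}$ and its mirror image, which are handled by Lemma \ref{Lemma1.3} (applied once in each order, using that its hypotheses are symmetric in $\alpha_1,\alpha_2$); here $b\ge\gamma$ and $b>\frac1r$ are assumed, and the requirement that at least one of $\alpha_1+\alpha_2-\alpha_0\ge\gamma+\frac1r$ and $\gamma\ge\frac{1}{2r}$ be strict is supplied exactly by the branching hypothesis of the proposition. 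Summing over the four sign combinations and using $\|u_{\pm_1}\|_{X^r_{\alpha_1,b}}\le\|u\|_{X^r_{\alpha_1,b}}$, $\|v_{\pm_2}\|_{X^r_{\alpha_2,b}}\le\|v\|_{X^r_{\alpha_2,b}}$ then completes the proof.

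The only genuinely delicate point is the bookkeeping of strict versus non-strict inequalities: Corollary \ref{Cor.2} tolerates equality in both $\alpha_1+\alpha_2-\alpha_0\ge\gamma+\frac1r$ and $\gamma\ge\frac{1}{2r}$, whereas Lemma \ref{Lemma1.3} requires one of these to be strict, and one must verify that the two alternative hypotheses of the proposition make both lemmas applicable in every one of the four cases simultaneously. Everything else is the routine transfer of a pointwise frequency inequality into the $L^{r'}_{\tau\xi}$-norm, so I expect no further obstacle.
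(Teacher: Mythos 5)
Your proposal is correct and follows essentially the same route as the paper: the paper simply cites the ``hyperbolic Leibniz rule'' $||\tau|-|\xi|| \lesssim ||\rho|-|\eta|| + ||\tau-\rho|-|\xi-\eta|| + b_{\pm}(\xi,\eta)$ from d'Ancona--Foschi--Selberg, splits $\|D^{\alpha_0}D^{\gamma}_-(uv)\|_{X^r_{0,0}}$ into the same three pieces, and applies Corollary \ref{Cor.2} to the $B^{\gamma}_{\pm}$ term and Lemma \ref{Lemma1.3} to the other two, exactly as you do. Your additional derivation of the pointwise modulation inequality and the sign bookkeeping are fine but not needed beyond the cited reference.
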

\begin{proof}
We apply the "hyperbolic Leibniz rule" (cf. \cite{AFS}, p. 128):
$$ ||\tau|-|\xi|| \lesssim ||\rho|-|\eta|| + ||\tau - \rho|-|\xi - \eta|| + b_{\pm}(\xi,\eta) \, $$
where $b_{\pm}$ was defined  in Prop. \ref{Prop.1.1}. This implies
\begin{align*}
&\|D^{\alpha_0} D^{\gamma}_-(uv)\|_{X^r_{0,0}} \\
&\,\,\lesssim \|D^{\alpha_0} B^{\gamma}_{\pm}(u,v)\|_{X^r_{0,0}} + \|D^{\alpha_0} ((D^{\gamma}_- u)v)\|_{X^r_{0,0}} + \|D^{\alpha_0}(u D^{\gamma}_- v)\|_{X^r_{0,0}} \, . 
\end{align*}
Now we apply Cor. \ref{Cor.2} to the first term on the r.h.s. and Lemma \ref{Lemma1.3} to the second and third term, and the result follows.
\end{proof}

In the case $\gamma =0$ we need the following non-trivial result.
\begin{prop}
\label{Prop.2.3}
Let $ 1 \le r \le 2$ , $\alpha_1,\alpha_2 \ge 0$ , $\alpha_1 + \alpha_2 > \frac{3}{2r}$ , $b_1+b_2 > \frac{3}{2r}$ and $b_1,b_2 > \frac{1}{2r}$ . Then the following estimate holds
$$ \|uv\|_{X^r_{0,0}} \lesssim \|u\|_{X^r_{\alpha_1,b_1}} \|v\|_{X^r_{\alpha_2,b_2}} \, . $$
\end{prop}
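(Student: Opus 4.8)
The plan is to treat the Hilbert case $r=2$ by hand and to reach $1<r<2$ through a dyadic frequency--modulation analysis, working throughout on the Fourier side where $\|uv\|_{X^r_{0,0}}=\|\tilde u*\tilde v\|_{L^{r'}_{\tau\xi}}$. For $r=2$ the estimate is a special case of Proposition \ref{AFS}: with $s_0=b_0=0$, $s_1=\alpha_1$, $s_2=\alpha_2$ and the given $b_1,b_2$, every condition listed there is implied by $\alpha_1+\alpha_2>\tfrac34$, $b_1+b_2>\tfrac34$ and $b_1,b_2>\tfrac14$, i.e. by the hypotheses specialised to $r=2$. Thus the whole content lies in the range $1<r<2$.

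For $1<r<2$ I would split $u=\sum_{N_1,L_1}u_{N_1,L_1}$ and $v=\sum_{N_2,L_2}v_{N_2,L_2}$ into pieces localised to $\langle\xi\rangle\sim N_i$ and $\langle|\tau|-|\xi|\rangle\sim L_i$. Since the Fourier supports of distinct blocks are almost disjoint, $\|u\|_{X^r_{\alpha_1,b_1}}\sim\big\|\big(N_1^{\alpha_1}L_1^{b_1}\|\widetilde{u_{N_1,L_1}}\|_{L^{r'}}\big)\big\|_{\ell^{r'}}$, and similarly for $v$, so matters reduce to a single block bound $\|\widetilde{u_{N_1,L_1}}*\widetilde{v_{N_2,L_2}}\|_{L^{r'}}\lesssim C(N_i,L_i)\,\|\widetilde{u_{N_1,L_1}}\|_{L^{r'}}\|\widetilde{v_{N_2,L_2}}\|_{L^{r'}}$. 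The constant $C$ is governed by the measure of the intersection of the two thickened characteristic cones (frequencies $\sim N_i$, widths $\sim L_i$); inserting the resulting powers of $N_i,L_i$ and summing against $N_i^{-\alpha_i}L_i^{-b_i}$ by Schur's test, the geometric series converge thanks to $\alpha_1+\alpha_2>\tfrac{3}{2r}$, $b_1+b_2>\tfrac{3}{2r}$ and $b_1,b_2>\tfrac{1}{2r}$, which is how these thresholds enter.

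The hard part is the block bound for the nearly anti-parallel, fully resonant interaction: $u$ concentrated near one null cone and $v$ near the opposite one, with comparable and almost opposite spatial frequencies, so that the output frequency falls into the low-modulation (spacelike) region and all three modulations are as small as the hypotheses permit. There the two cones are tangent, their thickened intersection is largest, and the modulation weights give no transversal gain. In particular the crude pointwise device that handles the genuinely null estimates --- bounding $\|\tilde u*\tilde v\|_{L^{r'}}$ by $\big(\sup_{\tau,\xi}\int(\text{weights})^{r}\big)^{1/r}$ times the two norms, as in the proof of Proposition \ref{Prop.1.1}, where the null symbol $b_\pm$ vanishes exactly in this configuration and makes the supremum finite --- is here unavailable, since $\gamma=0$ leaves no symbol to supply the decay. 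One is forced to retain the genuine spreading of $\tilde u*\tilde v$ in the output variables and to exploit $r'<\infty$; this gain is the orthogonality of Proposition \ref{AFS} at $r=2$ and weakens as $r\downarrow1$, so this resonant interaction is simultaneously the crux of the estimate and its most delicate regime for $r$ close to $1$.
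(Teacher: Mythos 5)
Your $r=2$ case is fine: with $s_0=b_0=0$, $s_1=\alpha_1$, $s_2=\alpha_2$ one checks that all conditions of Proposition \ref{AFS} follow from $\alpha_1+\alpha_2>\tfrac34$, $b_1+b_2>\tfrac34$, $b_1,b_2>\tfrac14$ (the paper instead attributes this endpoint to Selberg's bilinear restriction estimates, but the AFS route is equally valid). Your overall architecture for $1<r<2$ --- dyadic decomposition in frequency and modulation, almost-orthogonality of the blocks in $\ell^{r'}$, a single-block convolution bound, and summation using the three strict inequalities --- is exactly the architecture of the proof the paper relies on. Indeed the paper does not prove this proposition at all: it cites Selberg for $r=2$ and Grigoryan--Tanguay (Prop.\ 3.1) for $1<r\le 2$, remarking only that the dyadic summation is standard and that $r=1$ is also admissible.

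The gap is that you never establish the block estimate, which is the entire analytic content of the proposition for $r<2$. You assert that the constant $C(N_i,L_i)$ is ``governed by the measure of the intersection of the two thickened cones,'' but for $r'<\infty$ a measure bound on the support of $\widetilde{u_{N_1,L_1}}*\widetilde{v_{N_2,L_2}}$ is not enough: one needs a genuine $L^{r'}\times L^{r'}\to L^{r'}$ convolution estimate for functions supported on thickened cones, with the sharp powers of $N_i,L_i$ that make the sums close exactly at the thresholds $\tfrac{3}{2r}$ and $\tfrac{1}{2r}$. Your final paragraph explicitly concedes that the resonant, nearly anti-parallel block --- the only configuration where those thresholds are attained --- is not handled, and correctly observes that the $\sup_{\tau,\xi}$ device of Proposition \ref{Prop.1.1} fails there. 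Diagnosing where the difficulty sits is not the same as resolving it; as written, the proposal proves the proposition only for $r=2$. To close the argument you must either carry out the block estimate (interpolating the $L^1$ bound from the support measure against the $L^\infty$ bound coming from the cone-intersection geometry, which is what Grigoryan--Tanguay do) or cite their Proposition 3.1, as the paper does.
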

\begin{proof}
Selberg \cite{S} proved this in the case $r=2$ . The general case $1 < r \le 2$ was given by Grigoryan-Tanguay \cite{GT}, Prop. 3.1, but in fact the case $r=1$ is also admissible. More precisely the result follows from \cite{GT} after summation over dyadic pieces in a standard way.
\end{proof}

As a consequence of these results we obtain
\begin{prop}
\label{Prop.1.5}
Under the assumptions of Prop. \ref{Prop.1.4}
the following estimate holds:
$$ \|uv\|_{X^r_{\alpha_0,\gamma}} \lesssim \|u\|_{X^r_{\alpha_1,b}} \|v\|_{X^r_{\alpha_2,b}} \, . $$
\end{prop}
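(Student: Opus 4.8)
The plan is to derive the inhomogeneous bound of Proposition~\ref{Prop.1.5} from the homogeneous one in Proposition~\ref{Prop.1.4} by trading the weights $\langle\cdot\rangle$ for the homogeneous symbols $|\cdot|$. Writing $\sigma:=|\tau|-|\xi|$ and using $\langle\xi\rangle^{\alpha_0}\lesssim 1+|\xi|^{\alpha_0}$ and $\langle\sigma\rangle^{\gamma}\lesssim 1+|\sigma|^{\gamma}$ (both valid since $\alpha_0,\gamma\ge0$), one has the pointwise bound
$$\langle\xi\rangle^{\alpha_0}\langle\sigma\rangle^{\gamma}\lesssim 1+|\xi|^{\alpha_0}+|\sigma|^{\gamma}+|\xi|^{\alpha_0}|\sigma|^{\gamma},$$
and therefore
$$\|uv\|_{X^r_{\alpha_0,\gamma}}\lesssim \|uv\|_{X^r_{0,0}}+\|D^{\alpha_0}(uv)\|_{X^r_{0,0}}+\|D^{\gamma}_-(uv)\|_{X^r_{0,0}}+\|D^{\alpha_0}D^{\gamma}_-(uv)\|_{X^r_{0,0}}.$$
The last term is exactly the conclusion of Proposition~\ref{Prop.1.4}, so it remains to bound the three lower order terms by $\|u\|_{X^r_{\alpha_1,b}}\|v\|_{X^r_{\alpha_2,b}}$.

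The first term is immediate from Proposition~\ref{Prop.2.3}: under the hypotheses of Proposition~\ref{Prop.1.4} we have $\alpha_1+\alpha_2\ge\alpha_0+\gamma+\frac{1}{r}\ge\frac{3}{2r}$, with strict inequality supplied by the either/or alternative, while $b>\frac{1}{r}$ gives $b>\frac{1}{2r}$ and $b+b>\frac{3}{2r}$. For the second term I would invoke the fractional Leibniz rule $|\xi|^{\alpha_0}\lesssim|\eta|^{\alpha_0}+|\xi-\eta|^{\alpha_0}$ to reduce to $\|(D^{\alpha_0}u)v\|_{X^r_{0,0}}+\|u(D^{\alpha_0}v)\|_{X^r_{0,0}}$ and again apply Proposition~\ref{Prop.2.3}, now with regularities $(\alpha_1-\alpha_0,\alpha_2)$ and $(\alpha_1,\alpha_2-\alpha_0)$; the condition $0\le\alpha_0\le\alpha_1,\alpha_2$ keeps these nonnegative, their sum being $\alpha_1+\alpha_2-\alpha_0>\frac{3}{2r}$.

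The third term is the delicate one. I would apply the hyperbolic Leibniz rule exactly as in the proof of Proposition~\ref{Prop.1.4},
$$|\sigma|^{\gamma}\lesssim ||\rho|-|\eta||^{\gamma}+||\tau-\rho|-|\xi-\eta||^{\gamma}+b_{\pm}(\xi,\eta)^{\gamma},$$
which dominates $\|D^{\gamma}_-(uv)\|_{X^r_{0,0}}$ by $\|B^{\gamma}_{\pm}(u,v)\|_{X^r_{0,0}}+\|(D^{\gamma}_-u)v\|_{X^r_{0,0}}+\|u(D^{\gamma}_-v)\|_{X^r_{0,0}}$. The last two terms are handled by Lemma~\ref{Lemma1.3} applied with its parameter $\alpha_0$ set to $0$; crucially its hypotheses ($\alpha_1+\alpha_2\ge\gamma+\frac{1}{r}$, $\gamma\ge\frac{1}{2r}$, $b\ge\gamma$, $b>\frac{1}{r}$, one inequality strict) are all inherited from Proposition~\ref{Prop.1.4}, and Lemma~\ref{Lemma1.3} does not require the condition $\alpha_0>\frac{1}{r}-\gamma$.

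The genuinely new contribution, and what I expect to be the main obstacle, is the null-form term $\|B^{\gamma}_{\pm}(u,v)\|_{X^r_{0,0}}$: a direct appeal to Corollary~\ref{Cor.2} would force the choice $\alpha_0=0$, which violates its hypothesis $\alpha_0>\frac{1}{r}-\gamma$ whenever $\gamma\le\frac{1}{r}$. I would split according to the output frequency. For $|\xi|\ge1$ one has $1\le|\xi|^{\alpha_0}$, so this piece is dominated by $\|D^{\alpha_0}B^{\gamma}_{\pm}(u,v)\|_{X^r_{0,0}}$, which is precisely Corollary~\ref{Cor.2} with the actual parameters. For $|\xi|\le1$ the opposite-sign interaction is harmless, since $b_-(\xi,\eta)\le|\xi|\le1$ reduces it to $\|uv\|_{X^r_{0,0}}$ and Proposition~\ref{Prop.2.3}. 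The remaining same-sign piece $\|\mathbf{1}_{|\xi|\le1}B^{\gamma}_{+}(u,v)\|_{X^r_{0,0}}$ is the crux, and here I would rerun the $(+,+)$ computation of Proposition~\ref{Prop.1.1} with the homogeneous weights $|\eta|^{-\alpha_1 r},|\xi-\eta|^{-\alpha_2 r}$ replaced by the inhomogeneous ones $\langle\eta\rangle^{-\alpha_1 r},\langle\xi-\eta\rangle^{-\alpha_2 r}$. The point is that the hypothesis $\alpha_0>\frac{1}{r}-\gamma$ served in Proposition~\ref{Prop.1.1} only to control the small-$\tau$ (low input frequency) behaviour of the homogeneous weights; with the brackets that region is automatically bounded, and the only surviving requirement is the large-$\tau$ decay, which holds because $\alpha_1+\alpha_2>\frac{2}{r}$ and $\gamma\ge\max(\frac{1}{2r},\alpha_1-\frac{1}{r},\alpha_2-\frac{1}{r})$ by assumption. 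This yields $\sup_{|\xi|\le1,\,\tau}I\lesssim1$ in the notation of Proposition~\ref{Prop.1.1} and closes the estimate.
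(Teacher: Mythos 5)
Your argument is correct, and it is noticeably more careful than the paper's own proof, which disposes of the proposition in three lines: it asserts that Proposition \ref{Prop.1.4} reduces matters to $\|uv\|_{X^r_{\alpha_0,0}}\lesssim\|u\|_{X^r_{\alpha_1,b}}\|v\|_{X^r_{\alpha_2,b}}$ and settles that by the fractional Leibniz rule and Proposition \ref{Prop.2.3}, exactly as you handle your first two lower-order terms. The difference lies in your third term $\|D^{\gamma}_-(uv)\|_{X^r_{0,0}}$: the paper's reduction implicitly rests on the pointwise bound $\langle\xi\rangle^{\alpha_0}\langle|\tau|-|\xi|\rangle^{\gamma}\lesssim\langle\xi\rangle^{\alpha_0}+|\xi|^{\alpha_0}\,||\tau|-|\xi||^{\gamma}$, which fails in the region $|\xi|\ll 1\ll ||\tau|-|\xi||$, so the cross term you isolate is genuinely not covered by the paper's two-term splitting. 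Your treatment of it is sound: the pieces $(D^{\gamma}_-u)v$ and $u(D^{\gamma}_-v)$ do fall under Lemma \ref{Lemma1.3} with $\alpha_0=0$ (whose hypotheses, unlike those of Corollary \ref{Cor.2}, do not include $\alpha_0>\frac{1}{r}-\gamma$), and your identification of $\mathbf{1}_{|\xi|\le1}B^{\gamma}_{+}(u,v)$ as the only nontrivial remainder is exactly right, since $b_-\le|\xi|$ disposes of the opposite-sign piece. The final step is only sketched, but the exponent count closes: on the support of the $(+,+)$ interaction with $|\xi|\le1$ and $\tau\gtrsim 1$ both input frequencies are comparable to $\tau$, so the brackets may be replaced by homogeneous weights, the asymptotics from the proof of Proposition \ref{Prop.1.1} give $\sup I^{r}\sim\tau^{\gamma r+1-(\alpha_1+\alpha_2)r}$, and this is bounded precisely because $\alpha_1+\alpha_2\ge\alpha_1+\alpha_2-\alpha_0\ge\gamma+\frac{1}{r}$; the region $\tau\lesssim1$ is trivially bounded once the homogeneous input weights are replaced by brackets. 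In short, you reproduce the paper's argument for the terms it treats and supply a correct repair for the one it omits.
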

\begin{proof}
Prop. \ref{Prop.1.4} reduces matters to the estimate
$$ \|uv\|_{X^r_{\alpha_0,0}} \lesssim \|u\|_{X^r_{\alpha_1,b}} \|v\|_{X^r_{\alpha_2,b}} \, , $$
which by the fractional Leibniz rule follows from
$$ \|uv\|_{X^r_{0,0}} \lesssim \|u\|_{X^r_{\alpha_1-\alpha_0,b}} \|v\|_{X^r_{\alpha_2,b}} \, . $$
We have by assumption either $\alpha_1 - \alpha_0 + \alpha_2 > \gamma + \frac{1}{r} \ge \frac{3}{2r}$ or   $\alpha_1 - \alpha_0 + \alpha_2 \ge \gamma + \frac{1}{r} > \frac{3}{2r}$,  $2b > \frac{2}{r} $ , $ b > \frac{1}{r}$, so that we may apply Prop. \ref{Prop.2.3} to complete the proof.
\end{proof}

We also need the following estimate for the angle $\angle(\xi_1,\xi_2)$ between two vectors $\xi_1$ and $\xi_2$ (cf. e.g. \cite{ST}, Lemma 4.3):
\begin{lemma}
\label{Lemma9.1}
Let $\tau_{\mu} \in \R$ , $\xi_{\mu} \in \R^2$ . If $\tau_0 =\tau_1 + \tau_2$ and $\xi_0 = \xi_1 + \xi_2$ the following estimate holds:
$$ \angle (\pm_1 \xi_1,\pm_2 \xi_2) \lesssim \left( \frac{\langle |\tau_0|-|\xi_0|\rangle + \langle |\tau_1|\pm_1|\xi_1|\rangle + \langle |\tau_2|\pm_2|\xi_2|\rangle}{\min(\langle \xi_1\rangle,\langle \xi_2 \rangle)} \right)^{\half} \, . $$
\end{lemma}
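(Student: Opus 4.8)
The plan is to strip the statement down to its geometric core with the law of cosines and then reduce everything to an elementary inequality relating the frequencies to the three modulation weights.

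First I would record the law of cosines. Since $1-\cos\theta\sim\theta^2$ on $[0,\pi]$ and $\cos\angle(\pm_1\xi_1,\pm_2\xi_2)=(\pm_1\pm_2)\,\xi_1\cdot\xi_2/(|\xi_1||\xi_2|)$, substituting $2\,\xi_1\cdot\xi_2=|\xi_0|^2-|\xi_1|^2-|\xi_2|^2$ (which is just $\xi_0=\xi_1+\xi_2$ squared) gives
\[
1-\cos\angle(\pm_1\xi_1,\pm_2\xi_2)=\frac{(|\xi_1|+|\xi_2|)^2-|\xi_0|^2}{2|\xi_1||\xi_2|}
\quad\text{or}\quad
\frac{|\xi_0|^2-(|\xi_1|-|\xi_2|)^2}{2|\xi_1||\xi_2|}
\]
according as $\pm_1=\pm_2$ or $\pm_1\neq\pm_2$. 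Each numerator is a difference of squares whose first factor is $b_+=|\xi_1|+|\xi_2|-|\xi_0|\ge0$ resp.\ $b_-=|\xi_0|-||\xi_1|-|\xi_2||\ge0$, the very symbols of Prop.\ \ref{Prop.1.1} at $\xi=\xi_0,\ \eta=\xi_1$. Bounding the second factor by $|\xi_1|+|\xi_2|+|\xi_0|\le2(|\xi_1|+|\xi_2|)$ resp.\ $|\xi_0|+||\xi_1|-|\xi_2||\le2\max(|\xi_1|,|\xi_2|)$ and using $(|\xi_1|+|\xi_2|)/(|\xi_1||\xi_2|)\lesssim1/\min(|\xi_1|,|\xi_2|)$ yields
\[
\angle(\pm_1\xi_1,\pm_2\xi_2)^2\lesssim\frac{b_+}{\min(|\xi_1|,|\xi_2|)}\quad\text{resp.}\quad\frac{b_-}{\min(|\xi_1|,|\xi_2|)} .
\]
Thus the lemma reduces to the modulation bound $b_\pm\lesssim\langle|\tau_0|-|\xi_0|\rangle+\langle|\tau_1|\pm_1|\xi_1|\rangle+\langle|\tau_2|\pm_2|\xi_2|\rangle$.

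This last inequality is the heart of the matter, and I would prove it by a short case distinction using only $\tau_0=\tau_1+\tau_2$ and $|\tau_0|\le|\tau_1|+|\tau_2|$ (a rearrangement complementary to the hyperbolic Leibniz rule employed in Prop.\ \ref{Prop.1.4}). For $\pm_1=\pm_2=+$ one merely adds the two weights,
\[
(|\tau_1|+|\xi_1|)+(|\tau_2|+|\xi_2|)\ge|\tau_0|+|\xi_1|+|\xi_2|\ge|\xi_1|+|\xi_2|\ge b_+ .
\]
For the mixed case $\pm_1=+,\pm_2=-$ one writes $|\xi_0|=|\tau_0|-(|\tau_0|-|\xi_0|)\le|\tau_1|+|\tau_2|-(|\tau_0|-|\xi_0|)$ and substitutes $|\tau_1|=(|\tau_1|+|\xi_1|)-|\xi_1|$, $|\tau_2|=(|\tau_2|-|\xi_2|)+|\xi_2|$; since $(|\xi_2|-|\xi_1|)-||\xi_1|-|\xi_2||\le0$, the surviving terms are precisely the three modulation weights, giving $b_-\lesssim\langle|\tau_1|+|\xi_1|\rangle+\langle|\tau_2|-|\xi_2|\rangle+\langle|\tau_0|-|\xi_0|\rangle$. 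The combination $\pm_1=-,\pm_2=+$ is symmetric. The one delicate point — and essentially the only obstacle — is the correct pairing of the $b_\pm$ produced by the law of cosines with the modulation signs $\pm_1,\pm_2$ dictated by the half-wave components in force; once this sign bookkeeping is fixed, the bound is immediate from the triangle inequality as above.

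Finally I would pass from $\min(|\xi_1|,|\xi_2|)$ to $\min(\langle\xi_1\rangle,\langle\xi_2\rangle)$. When $\min(|\xi_1|,|\xi_2|)\gtrsim1$ the two are comparable and the previous steps give the claim directly; when $\min(|\xi_1|,|\xi_2|)\lesssim1$ the right-hand side of the lemma is $\gtrsim1$, its numerator being $\ge1$ and its denominator $\sim1$, whereas $\angle(\pm_1\xi_1,\pm_2\xi_2)\le\pi$, so the estimate holds trivially in that regime.
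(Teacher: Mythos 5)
Your geometric reduction is the right one and is carried out correctly: the law of cosines gives $1-\cos \angle(\pm_1\xi_1,\pm_2\xi_2)=\frac{(|\xi_1|+|\xi_2|)^2-|\xi_0|^2}{2|\xi_1||\xi_2|}$ for equal signs and $\frac{|\xi_0|^2-(|\xi_1|-|\xi_2|)^2}{2|\xi_1||\xi_2|}$ for unequal signs, whence $\angle(\pm_1\xi_1,\pm_2\xi_2)^2\lesssim b_{\pm}/\min(|\xi_1|,|\xi_2|)$, and your passage from $\min(|\xi_j|)$ to $\min(\langle\xi_j\rangle)$ is fine. (The paper itself offers no proof, deferring to \cite{ST}, Lemma 4.3, so there is nothing to compare against except that reference.) The gap is in the modulation bound $b_{\pm}\lesssim\langle|\tau_0|-|\xi_0|\rangle+\langle|\tau_1|\pm_1|\xi_1|\rangle+\langle|\tau_2|\pm_2|\xi_2|\rangle$: you verify it for $(\pm_1,\pm_2)=(+,+)$ and for the two mixed cases, but the case $(-,-)$ is never addressed, and it is precisely the case that cannot be closed with the weights as literally printed. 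Take $|\xi_1|=N\gg 1$, $\xi_2=-\xi_1+\xi_0$ with $|\xi_0|\le 1$, $\tau_1=N$, $\tau_2=-N$. Then $\angle(-\xi_1,-\xi_2)=\angle(\xi_1,\xi_2)\approx\pi$, while $\langle|\tau_1|-|\xi_1|\rangle\approx\langle|\tau_2|-|\xi_2|\rangle\approx\langle|\tau_0|-|\xi_0|\rangle\approx 1$ and $\min(\langle\xi_1\rangle,\langle\xi_2\rangle)\approx N$, so the asserted right-hand side tends to $0$; equivalently $b_+\approx 2N$ is not controlled by $||\tau_1|-|\xi_1||+||\tau_2|-|\xi_2||+||\tau_0|-|\xi_0||=O(1)$. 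The absolute values on $\tau_1,\tau_2$ destroy exactly the information (which sheet of the cone each factor sits near) that the lemma is supposed to encode.

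The repair — and the form in which the lemma is actually used in the paper, where the extracted half-powers are absorbed into $X^r_{\cdot,b,\pm_j}$ norms with symbol $\langle\tau\pm_j|\xi|\rangle$ — is to take the \emph{signed} inner weights $\langle\tau_j\pm_j|\xi_j|\rangle$ (only the zeroth weight legitimately carries $|\tau_0|$, since $||\tau_0|-|\xi_0||=\min_{\pm}|\tau_0\pm|\xi_0||$ only strengthens the statement). With signed weights your triangle-inequality step closes uniformly in all four sign cases from the single identity $(\tau_1\pm_1|\xi_1|)+(\tau_2\pm_2|\xi_2|)-\tau_0=\pm_1|\xi_1|\pm_2|\xi_2|$ combined with $|\tau_0|\le|\xi_0|+||\tau_0|-|\xi_0||$ resp. $|\xi_0|\le|\tau_0|+||\tau_0|-|\xi_0||$: for equal signs this yields $b_+\le|\tau_1\pm_1|\xi_1||+|\tau_2\pm_2|\xi_2||+||\tau_0|-|\xi_0||$, and for mixed signs the same bound for $b_-$, which is your computation with the absolute values on $\tau_1,\tau_2$ removed. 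So the architecture of your argument is sound, but as written it omits one quadrant of signs, and in that quadrant the statement you are proving is false as printed; you should flag the sign correction rather than reproduce the printed weights.
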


\section{The Chern-Simons-Higgs system (Proof of Theorem \ref{Thm.0.1})}
Conveniently we assume $\kappa=1$ .
In the Lorenz gauge $\partial^{\mu} A_{\mu} =0$ the Chern-Simons-Higgs system
 (\ref{0.1}),(\ref{0.2}) is equivalent to the following system
\begin{align}
\label{3.1'}
\Box \,  A_{\nu}  & = 2 \epsilon_{\mu\nu\lambda} \partial^{\mu} Im(\overline{\phi} D^{\lambda} \phi) \\
\label{3.2'}
(\Box +1)\phi  &= 2i A^{\mu} \partial_{\mu} \phi  + A^{\mu} A_{\mu} \phi - \phi V'(|\phi|^2) + \phi\, .
\end{align}
The initial data are given by
$$
A_{\nu}(0) = a_{\nu} \, , \, \partial_t A_0(0) = -\partial^l a_l \, , \, \partial_t A_j(0) = \partial_j a_0 + 2 \epsilon_{0jk} Im(\overline{f}(\partial^k-ia^k)f) \, ,$$
$$\phi(0) = f \, , \, \partial_t \phi(0) = g \, .  $$

Using \cite{HO} we may reformulate the CSH system as 
\begin{align*}
(-i\partial_t \pm D)A_{\nu \pm} & = - \epsilon_{\mu \nu \lambda}R^{\mu}_{\pm} Im(\overline{\phi} D^{\lambda} \phi) \\
(-i \partial_t \pm \langle D \rangle) \phi_{\pm} & = \half \langle D \rangle^{-1} (2i A_{\mu} \partial^{\mu} \phi + A_{\mu} A^{\mu} \phi + \phi) \, .
\end{align*} 
The homogeneous parts are given by
\begin{align*}
A^{hom}_{0 \pm}(t) & = \half e^{\pm(-i)tD} (a_0 \pm \frac{1}{i} D^{-1}\partial^l a_l) \\
A^{hom}_{j\pm}(t) & = \half e^{\pm(-i)tD} (a_j \mp \frac{1}{i} D^{-1}\partial^j a_j) \\
\phi^{hom}_{\pm}(t) & = \half e^{\pm(-i)t\langle D \rangle} (f \pm \frac{1}{i} D^{-1} g) \, ,
\end{align*}
so that 
\begin{align*}
\|A^{hom}_{0\pm}\|_{X^r_{s,b,\pm}[0,T]} &\lesssim \sum_{\mu} \|a_{\mu}\|_{\hat{H}^{s,r} }\\ 
\|\phi_{\pm}^{hom}\|_{X^r_{s+\half,b,\pm}[0,T]} & \lesssim \|f\|_{\hat{H}^{s+\half,r}} + \|g\|_{\hat{H}^{s-\half,r}}
\end{align*}
with implicit constants independent of $T$ , where we used the well-known estimates (cf. e.g. \cite{G}) $\|e^{\pm(-i)tD} \phi_0\|_{X^r_{s,b,\pm}[0,T]} \lesssim \|\phi_0\|_{\hat{H}^{s,r}}$ .  Here $\phi=\phi_+ + \phi_-$ , $A_{\nu} = A_{\nu +} + A_{\nu -}$ and $\phi_{\pm} = \half(\phi \pm i^{-1} \langle D  \rangle^{-1} \partial_t \phi)$ , $A_{\nu\pm} = \half(A_{\nu} \pm i^{-1} D^{-1} \partial_t A_{\nu})$ . The Riesz transform is given by $R_{\pm}^j := \mp i^{-1} D^{-1} \partial_j$ and $R^0_{\pm} := -1$ . 

Using $D^{\lambda} = \partial^{\lambda} -i A^{\lambda}$ and the multilinear character of the nonlinearities the estimates concerning (\ref{3.1'}) are by Theorem \ref{Theorem0.3} reduced to
\begin{equation}
\label{3.1}
 \|\epsilon_{\mu \nu \lambda} R^{\mu}_{\pm} (Im(\overline{\phi_1} \partial^{\lambda} \phi_2))\|_{X^r_{s,b-1+}} \lesssim \sum_{\pm_1,\pm_2} \|\phi_{1,\pm_1}\|_{X^r_{s+\half,b,\pm_1}}  \|\phi_{2,\pm_2}\|_{X^r_{s+\half,b,\pm_2}} 
\end{equation}
and
\begin{align}
\nonumber
&\|\epsilon_{\mu \nu \lambda} R^{\mu}_{\pm} (Im(\overline{\phi_1} A^{\lambda} \phi_2))\|_{X^r_{s,b-1+}} \\
\label{3.2}
&\quad \quad\lesssim \sum_{\pm_1,\pm_2,\pm_3} \|\phi_{1,\pm_1}\|_{X^r_{s+\half,b,\pm_1}}  \|\phi_{2,\pm_2}\|_{X^r_{s+\half,b,\pm_2}} \|A_{\pm_3}\|_{X^r_{s,b,\pm_3}} \, . 
\end{align}
 (\ref{3.1}) is equivalent to the estimate
\begin{align*}
& \left|\int \epsilon_{\mu \nu \lambda} Im(\overline{\phi_{1,\pm_1}} R^{\lambda}_{\pm_2} \phi_{2,\pm_2}) R^{\mu}_{\pm_0} \phi_{0,\pm_0} \, dt\,dx \right|\\
& \quad\quad\lesssim \|\phi_{0,\pm_0}\|_{X^{r'}_{-s,1-b+}}  \|\phi_{1,\pm_1}\|_{X^{r}_{s+\half,b}}   \|\phi_{2,\pm_2}\|_{X^{r}_{s-\half,b}} \, ,
\end{align*}                                       
which contains the null form $Q^{\lambda \mu}$ between $\phi_2$ and $\phi_0$ , so that by duality we have to prove
\begin{equation}
\label{3.2''}
 \|Q^{\lambda \mu}_{\pm_1,\pm_2}(u,v)\|_{X^{r'}_{-s-\half,-b}} \lesssim \|u\|_{X^{r'}_{-s,1-b+}} \|v\|_{X^r_{s-\half,b}} 
\end{equation}
for $ s > \frac{3}{2r} - \half$ and $b=\half+\frac{1}{2r}+$ ,
where
\begin{align*}
 Q^{\lambda \mu}_{\pm_1,\pm_2} (u,v) & := R^{\lambda}_{\pm_1} u R^{\mu}_{\pm_2} v - R^{\mu}_{\pm_1} u R^{\lambda}_{\pm_2} v \, .
\end{align*}

The symbol of $Q^{\lambda \mu}$ is estimated using \cite{FK}, Lemma 13.2.
\begin{equation}
\label{Q1}
|\eta \times (\xi - \eta)| \lesssim |\eta|^{\half} |\xi-\eta|^{\half} (|\eta|+|\xi - \eta|)^{\half} (|\eta|+|\xi-\eta|-|\xi|)^{\half} 
\end{equation}
in the case of equal signs, and
\begin{equation}
\label{Q2}
|\eta \times (\xi - \eta)| \lesssim |\eta|^{\half} |\xi-\eta|^{\half} (|\eta|+|\xi - \eta|)^{\half} (|\xi|-||\eta| - |\xi-\eta||)^{\half} 
\end{equation}
in the case of unequal signs.

Thus (\ref{3.2''}) is reduced to
$$ \|uv\|_{X^{r'}_{-s-\half,-b+\half}} \lesssim \|u\|_{X^{r'}_{-s+\half,1-b+}} \|v\|_{X^r_{s-\half,b}}  $$
and 
$$ \|uv\|_{X^{r'}_{-s-\half,-b+\half}} \lesssim \|u\|_{X^{r'}_{-s,1-b+}} \|v\|_{X^r_{s,b}} \, .  $$
This is by duality equivalent to
\begin{equation}
\label{3.3}
 \|vw\|_{X^{r}_{s-\half,b-1-}} \lesssim \|v\|_{X^{r}_{s-\half,b}} \|w\|_{X^r_{s+\half,b-\half}}  \end{equation}
and
\begin{equation}
\label{3.3'}
 \|vw\|_{X^{r}_{s,b-1-}} \lesssim \|v\|_{X^{r}_{s,b}} \|w\|_{X^r_{s+\half,b-\half}} \, . 
\end{equation}
Let $s=\frac{3}{2r}-\half+$ . For bigger $s$ the result then also holds. By the fractional Leibniz rule this reduces to the following estimates:
\begin{align*}
 \|vw\|_{X^{r}_{0,b-1-}}&\lesssim \|v\|_{X^{r}_{s-\half,b}} \|w\|_{X^r_{1,b-\half}} \, , \\
\|vw\|_{X^{r}_{0,b-1-}}& \lesssim \|v\|_{X^{r}_{s,b}} \|w\|_{X^r_{\half,b-\half}} \, .
\end{align*}
For $r=1+$ , $b = \half + \frac{1}{2r}+$ this holds by Prop. \ref{Prop.2.3}, because $s+\half > \frac{3}{2r}$ , $2b-\half > \half+\frac{1}{r} > \frac{3}{2r}$ 
and $b-\half > \frac{1}{2r}$ , and for $r=2$ , $b=\frac{3}{4}+$ and $s > \frac{1}{4}$ by Prop. \ref{AFS} , thus by interpolation for $1<r\le 2$ .

Next we consider the quadratic term in (\ref{3.2'}). We want to show
\begin{equation}
\label{4.1}
\|A^{\mu} \partial_{\mu} \phi\|_{X^r_{s-\half,b-1+,\pm}} \lesssim \sum_{\pm_1,\pm_2} \|A_{\pm_1}\|_{X^r_{s,b,\pm_1}} \|\phi_{\pm_2}\|_{X^r_{s+\half,b,\pm_2}} \, .
\end{equation}
We decompose
$ \partial_j  \phi = \sum_{\pm} (\pm i) R_{\pm}^j D \phi_{\pm}$ , $\partial_0 \phi = \sum_{\pm} (\pm i) R_{\pm}^0 D \phi_{\pm} - \sum_{\pm} (\pm i)(D-\langle D \rangle)\phi_{\pm}$ . The last term is easily treated, because $|\xi - \langle \xi \rangle| \le 1$ , so that 
in fact we only have to prove in this case
$$ \|uv\|_{X^r_{s-\half,b-1+}} \lesssim \|u\|_{X^r_{s,b}} \|v\|_{X^r_{s+\half,b}} \, ,$$ 
 which is obviously true, if (\ref{4.4}) below holds.

We now use the Hodge decomposition $A_i = A_i^{df} + A_i^{cf} $ , where the divergence-free part $A^{df}$ and the curl-free part $A^{cf}$ are given by
\begin{align*}
A^{df} & = (R_1 R_2 A_2-R_2 R_2 A_1, R_1R_2A_1-R_1R_1 A_2) \, , \\
A^{cf} & = (-R_1R_2 A_2-R_1R_1A_1,-R_1R_2A_1-R_2R_2A_2) \, ,
\end{align*}
where $R_j = i^{-1} \langle D \rangle^{-1} \partial_j$ . It is well-known (cf. \cite{ST} or \cite{HO}) that 
$$ \sum_{\pm_1} A^{df}_l R^l_{\pm_1} \psi_{\pm_1} = \sum_{\pm_1,\pm_2} Q^{12}_{\pm_1,\pm_2}(B_{\pm_2},\psi_{\pm_1}) \, , $$
where $B_{\pm}:= R_{\pm_1} A_{2 \pm} - R_{\pm_2} A_{1 \pm} $ , and
$$ \sum_{\pm_1} (A_0R^0_{\pm_1} \psi_{\pm_1} + A^{cf}_l R^l_{\pm_1} \psi_{\pm_1}) = \sum_{\pm_1,\pm_2} Q^0_{\pm_1,\pm_2}(\psi_{\pm_1},A_{0 \pm_2}) \, .$$ 
The null form $Q^{12}$ was defined before and $Q^0$ is defined by
\begin{align*}
 Q^0 _{\pm_1,\pm_2} (u,v) & = R_{\mu \pm_1} u R^{\mu}_{\pm_2} v \, .
\end{align*}
What remains to be proven are the following estimates:
\begin{equation}
\label{4.2}
\| Q^{12}_{\pm_1,\pm_2} (u,v)\|_{X^r_{s-\half,b-1+}} \lesssim \|u\|_{X^r_{s,b}} \|v\|_{X^r_{s-\half,b}} 
\end{equation}
and
\begin{equation}
\label{4.3}
\| Q^0_{\pm_1,\pm_2} (u,v)\|_{X^r_{s-\half,b-1+}} \lesssim \|u\||_{X^r_{s,b}} \|v\|_{X^r_{s-\half,b}} \, .
\end{equation}
As before we handle (\ref{4.2}) by \cite{FK}, Lemma 13.2 which reduces matters to
\begin{equation}
\label{4.4}
\|uv\|_{X^r_{s,b-\half+}} \lesssim \|u\|_{X^r_{s+\half,b}} \|v\|_{X^r_{s,b}} \, .
\end{equation}
We apply Prop. \ref{Prop.1.5} with parameters $\alpha_0 =s$ , $\alpha_1=s+\half$ , $\alpha_2 = s$ , $\gamma = s+\half-\frac{1}{r} $, thus $\alpha_1+\alpha_2-\alpha_0 = s+\half = \gamma + \frac{1}{r} $ , $\alpha_1+\alpha_2 = 2s+\half > \frac{3}{r} -\half \ge \frac{2}{r}$ , $\gamma > \frac{1}{2r}$ by our assumption $ s > \frac{3}{2r} -\half $ , and  $\gamma = \alpha_1-\frac{1}{r}$ . We also remark that $\gamma > b-\half+$ under our assumption $b= \half+\frac{1}{2r}+$ . This implies (\ref{4.4}).

In order to prove (\ref{4.3}) we use Lemma \ref{Lemma9.1} and the fact that the symbol of $Q^0_{\pm_1,\pm_2}$ is bounded by the angle $\angle(\pm_1 \xi_1,\pm_2 \xi_2)$ between the vectors $\pm_1\xi_1$ and $\pm_2\xi_2$ (even  $\angle(\pm_1 \xi_1,\pm_2 \xi_2)^2$) (cf. e.g. \cite{HO}, Lemma 2.6). Thus (\ref{4.3}) reduces to the following five estimates:
\begin{align}
\label{4.1'}
\|uv\|_{X^r_{s-\half,b-\half+}} & \lesssim \|u\|_{X^r_{s+\half,b}} \|v\|_{X^r_{s-\half,b}} \\
\label{4.2'}
\|uv\|_{X^r_{s-\half,b-\half+}} & \lesssim \|u\|_{X^r_{s,b}} \|v\|_{X^r_{s,b}} \\
\label{4.3'}
\|uv\|_{X^r_{s-\half,b-1+}} & \lesssim \|u\|_{X^r_{s+\half,b-\half}} \|v\|_{X^r_{s-\half,b}} \\
\label{4.4'}
\|uv\|_{X^r_{s-\half,b-1+}} & \lesssim \|u\|_{X^r_{s,b-\half}} \|v\|_{X^r_{s,b}} \\
\label{4.5'}
\|uv\|_{X^r_{s-\half,b-1+}} & \lesssim \|u\|_{X^r_{s+\half,b}} \|v\|_{X^r_{s-\half,b-\half}} \, .
\end{align}
We first consider (\ref{4.1'}) and (\ref{4.2'}) in the case $r=1+$. We apply Prop. \ref{Prop.1.5} with $\alpha_0=s-\half$ , $\alpha_1= s+\half$ , $\alpha_2= s-\half$ (or $\alpha_1= \alpha_2=s$) , $\gamma=s+\half-\frac{1}{r}$ assuming $s=\frac{3}{2r}-\half+\epsilon$ and $b=\half+\frac{1}{2r}+$ . We easily check the necessary conditions: one has $\alpha_1+\alpha_2-\alpha_0 =s+\half= \gamma + \frac{1}{r}$ , $\alpha_1+\alpha_2 = 2s = \frac{3}{r}-1+2\epsilon > \frac{2}{r}$ for $r$ close enough to $1$. Moreover $\alpha_0 = \frac{3}{2r}-1+\epsilon > \frac{1}{2r}-\epsilon = \frac{2}{r}-s-\half = \frac{1}{r}-\gamma$ and $\gamma = \frac{1}{2r}+\epsilon$ as well as $\gamma= \alpha_1-\frac{1}{r}$ . Because $\gamma > b-\half$ this implies (\ref{4.1'}) and (\ref{4.2'}) in the case $r=1+$. In the case $r=2$ , $ s=\frac{1}{4}+\epsilon$ , $b=\frac{3}{4}+ $ we apply Prop. \ref{AFS} with $s_0=\half -s= \frac{1}{4}-\epsilon$, $b_0=-\frac{1}{4}-$ , $s_1=s+\half$ , $s_2=s-\half$ , $b_1=b_2 = \frac{3}{4}+$ (or $s_1=s_2=s$) , so that $s_0+s_1+s_2 = \frac{3}{4}+\epsilon$ and $s_0+b_0+2s_1+2s_2= s+\half+2s-\frac{1}{4}- > 1$ .
By interpolation this implies (\ref{4.1'}) and (\ref{4.2'}) in the general case $1<r\le2$ .

Next we treat (\ref{4.3'}). In the case $r=1+$ we use the fractional Leibniz rule first and then Prop. \ref{Prop.2.3} with $\alpha_1=1$ , $\alpha_2 = s-\half$ (or $\alpha_1=s+\half$ , $\alpha_2=0$) , $b_1=b-\half$, $b_2=b$  , so that $\alpha_1+\alpha_2 =s+\half > \frac{3}{2} > \frac{3}{2r}$ , $b_1+b_2=2b-\half = \half + \frac{1}{r} + > \frac{1}{2r} $, $b_1 > \frac{1}{2r}$ . Similarly (\ref{4.4'}) and (\ref{4.5'}) can be treated. In the case $r=2$ , $s= \frac{1}{4}+\epsilon$ and $b=\frac{3}{4}+$ we apply Prop. \ref{AFS}. For (\ref{4.3'}) we have $s_0=\half-s= \frac{1}{4}-\epsilon$ , $b_0=\frac{1}{4}-$ , $s_1=s+\half=\frac{3}{4}+\epsilon$ , $b_1= b-\half= \frac{1}{4}+$ , $s_2= s-\half= -\frac{1}{4}+\epsilon$ , $b_2=b=\frac{3}{4}+$ , so that
$s_0+s_1+s_2= \frac{3}{4}+\epsilon$ , $s_0+s_1+s_2+s_0+s_2+b_1 = \frac{3}{4}+\epsilon+\frac{1}{4}+ > 1$ , thus (\ref{4.3'}) holds. Similarly (\ref{4.4'}) and (\ref{4.5'}) are proven for $r=2$ . By bilinear interpolation we obtain these estimates for $1<r \le 2$ .

Next we prove the estimates for the cubic terms starting with (\ref{3.2}). We obtain
$$
\|A\phi_1 \phi_2\|_{X^r_{0,0}}  \lesssim \|A\|_{X^r_{0,b}} \|\phi_1 \phi_2\|_{X^r_{s+\half,s+\half-\frac{1}{r}}} 
\lesssim \|A\|_{X^r_{0,b}} \|\phi_1\|_{X^r_{s+\half,b}} \|\phi_2\|_{X^r_{s+\half,b}} \, ,
$$
where we used $s+\half > \frac{3}{2r}$ and $b+s+\half-\frac{1}{r} > \half+\frac{1}{r} > \frac{3}{2r}$ , so that Prop. \ref{Prop.2.3} applies for the first estimate, and the fractional Leibniz rule combined with (\ref{4.1'}) for the second estimate. Moreover similarly we obtain
$$\|A\phi_1 \phi_2\|_{X^r_{0,0}}  \lesssim \|A\|_{X^r_{s,b}} \|\phi_1 \phi_2\|_{X^r_{\half,s+\half-\frac{1}{r}}} 
\lesssim \|A\|_{X^r_{s,b}} \|\phi_1\|_{X^r_{s+\half,b}} \|\phi_2\|_{X^r_{\half,b}} \, .
$$
The first estimate follows as above and the second estimate by Prop. \ref{Prop.1.5} with parameters $\alpha_0 = \half$ , $\alpha_1=s+\half$ , $\alpha_2 = \half$ , $\gamma = s+\half-\frac{1}{r}$ , so that $\alpha_1 + \alpha_2 = s+1 = \frac{3}{2r}+\half+ > \frac{2}{r}$ , $\alpha_1+\alpha_2-\alpha_0 = s+\half = \gamma+\frac{1}{r}$ , $\alpha_0=\half > \frac{1}{r}-\gamma = \frac{1}{2r}-$ and $\gamma > \frac{1}{2r}$ .
Consequently by the fractional Leibniz rule we obtain
$$ \|A\phi_1 \phi_2\|_{X^r_{s,0}} 
\lesssim \|A\|_{X^r_{s,b}} \|\phi_1\|_{X^r_{s+\half,b}} \|\phi_2\|_{X^r_{s+\half,b}} \, ,
$$
which implies (\ref{3.2}).

Next we have to consider the cubic term $A^{\mu} A_{\mu} \phi$ in equation (\ref{3.2'}), which requires the estimate
\begin{equation}
\label{6.1}
\|A_1 A_2 \phi\|_{X^r_{s-\half,b-1+}} \lesssim \|A_1\|_{X^r_{s,b}} \|A_2\|_{X^r_{s,b}} \|\phi\|_{X^r_{s+\half,b}} \, . 
\end{equation}
First we consider the case $r=1+$ , $b > \frac{1}{r}$ , $s > 1$ . By the fractional Leibniz rule we reduce to
\begin{equation}
\label{6.2}
\|A_1 A_2 \phi\|_{X^r_{0,0}} \lesssim \|A_1\|_{X^r_{\half,b}} \|A_2\|_{X^r_{s,b}} \|\phi\|_{X^r_{s+\half,b}} \, . 
\end{equation}
and
\begin{equation}
\label{6.3}
\|A_1 A_2 \phi\|_{X^r_{0,0}} \lesssim \|A_1\|_{X^r_{s,b}} \|A_2\|_{X^r_{s,b}} \|\phi\|_{X^r_{1,b}} \, . 
\end{equation}
We obtain
$$
\|A_1 A_2 \phi\|_{X^r_{0,0}} \lesssim \|A_1\|_{X^r_{\half,b}} \|A_2 \phi\|_{X^r_{1,s+\half-\frac{1}{r}}} 
 \lesssim \|A_1\|_{X^r_{\half,b}} \|A_2\|_{X^r_{1,b}} \|\phi\|_{X^r_{s+\half,b}} 
$$
by Prop. \ref{Prop.2.3} for the first step and Prop. \ref{Prop.1.5} for the second step with parameters $\alpha_0 = \alpha_1=1$ , $\alpha_2= s+\half$ , $\gamma=s+\half-\frac{1}{r}$ , so that $\alpha_1+\alpha_2-\alpha_0 = \gamma + \frac{1}{r}$ , $\alpha_1+\alpha_2 > \frac{2}{r}$ , $\gamma > \frac{1}{2r}$ , $\gamma = \alpha_2-\frac{1}{r}$ and also $\gamma = s+\half-\frac{1}{r} \ge 1-\frac{1}{r}= \alpha_1-\frac{1}{r}$ , because $s \ge \half$ for $r=1+$ . In a similar way we obtain by Prop. \ref{Prop.2.3} and Prop. \ref{Prop.1.5} with $\alpha_0=\half$ , $\alpha_1=s$ , $\alpha_2=1$ and $\gamma =s+\half-\frac{1}{r}$ :
$$
\|A_1 A_2 \phi\|_{X^r_{0,0}}  \lesssim \|A_1\|_{X^r_{s,b}} \|A_2 \phi\|_{X^r_{\half,s+\half-\frac{1}{r}}} 
 \lesssim \|A_1\|_{X^r_{s,b}} \|A_2\|_{X^r_{s,b}} \|\phi\|_{X^r_{1,b}} \, ,
$$
which implies (\ref{6.3}) and also (\ref{6.1}) in the case $r=1+$ .
In the case $r=2$ , $s=\frac{1}{4}+\epsilon$ , $b= \frac{3}{4}+$ we use Prop. \ref{AFS} and obtain
$$
\|A_1 A_2 \phi\|_{X^2_{s-\half,-\frac{1}{4}+}} \lesssim \|A_1\|_{X^2_{s,b}} \|A_2 \phi\|_{X^2_{s,0}} 
 \lesssim \|A_1\|_{X^r_{s,b}} \|A_2\|_{X^2_{s,b}} \|\phi\|_{X^2_{s+\half,b}} \, .
$$
By trilinear interpolation between the cases $r=1+$ and $r=2$ this implies (\ref{6.1}) for $1<r\le 2$ .

Finally we consider the term $\phi V'(|\phi|^2)$ , where $V(r) = r(1-r)^2$ or more generally a cubic polynomial. This means that $\phi V'(|\phi|^2)$ is a quintic  polynomial with most critical term $\phi |\phi|^4$ . We have to prove for $s=\frac{3}{2r} -\half+\epsilon$ , $b=\half+\frac{1}{2r}+$, $\epsilon > 0$ small:
$$ \| \prod_{i=1}^5 u_i\|_{X^r_{s-\half,b-1+}} \lesssim \prod_{i=1}^5 \|u_i\|_{X^r_{s+\half,b}} \, . $$
In the case $r=2$ we apply Prop. \ref{AFS} repeatedly:
\begin{align*}
\|\prod_{i=1}^5 u_i\|_{H^{-\frac{1}{4}+\epsilon,-\frac{1}{4}+}} & \lesssim \|u_1\|_{H^{\frac{3}{4}+\epsilon,\frac{3}{4}+}} \|\prod_{i=2}^5 u_i\|_{H^{-\frac{1}{8}+,-\frac{1}{8}+}} \, ,
\\
\|\prod_{i=2}^5 u_i\|_{H^{-\frac{1}{8}+,-\frac{1}{8}+}} & \lesssim \|u_2\|_{H^{\frac{3}{4}+\epsilon,\frac{3}{4}+}} \|\prod_{i=3}^5 u_i\|_{H^{0,0}} \, ,
\\
\|\prod_{i=3}^5 u_i\|_{H^{0,0}} & \lesssim \|u_3\|_{H^{\frac{3}{4}+\epsilon,\frac{3}{4}+}} \|\prod_{i=4}^5 u_i\|_{H^{\frac{1}{8},\frac{1}{8}}} \, ,
\\
\|\prod_{i=4}^5 u_i\|_{H^{\frac{1}{8},\frac{1}{8}}} & \lesssim \|u_4\|_{H^{\frac{3}{4}+\epsilon,\frac{3}{4}+}} \|u_5\|_{H^{\frac{1}{4},\frac{1}{4}}} \, ,
\end{align*}
which is more than enough. If $r=1+$ and $s=1+\epsilon$ we obtain by Prop. \ref{Prop.2.3} and the fractional Leibniz rule
$$\|\prod_{i=1}^5 u_i\|_{X^r_{s-\half,0}}  \lesssim \|u_1\|_{X^r_{s+\half,b}} \|\prod_{i=2}^5 \|u_i\|_{X^r_{s-\half,\frac{1}{2r}+\epsilon}} \, . $$
The estimate
$$\|\prod_{i=2}^5 u_i\|_{X^r_{s-\half,\frac{1}{2r}+\epsilon}}  \lesssim \|u_2\|_{X^r_{s+\half,b}} \|\prod_{i=3}^5 \|u_i\|_{X^r_{s-\half,b}}  $$
is obtained by Prop. \ref{Prop.1.5} with $\alpha_0=s-\half$ , $\alpha_1=s+\half$ , $\alpha_2=s-\half$ and $\gamma= \frac{1}{2r}+\epsilon $, so that $\alpha_1+\alpha_2-\alpha_0 = s+\half= \gamma + \frac{1}{r}$ , $\alpha_0=\frac{3}{2r}-1+\epsilon > \frac{1}{r}-\gamma =  \frac{1}{2r}-\epsilon$ for $r$ close enough to $1$. Similarly we obtain
$$\|\prod_{i=3}^5 u_i\|_{X^r_{s-\half,b}}  \lesssim \|u_3\|_{X^r_{s+\half,b}} \|\prod_{i=4}^5 \|u_i\|_{X^r_{s,b}}  $$
by Prop. \ref{Prop.1.5} with $\alpha_0 = s-\half$ , $\alpha_1=s+\half$ , $\alpha_2=s$ and $\gamma = b$ , so that $\alpha_1+\alpha_2-\alpha_0 = \frac{3}{2r}+\half+\epsilon > \gamma+\frac{1}{r}=\frac{3}{2r}+\half+$ and also
$$\|\prod_{i=4}^5 u_i\|_{X^r_{s,b}}  \lesssim \|u_4\|_{X^r_{s+\half,b}} \| \|u_5\|_{X^r_{s+\half,b}}  \, .$$
By multilinear interpolation we obtain the desired estimate for $1<r\le 2$ .

\section{The Chern-Simons-Dirac system (Proof of Theorem \ref{Thm.0.2})}
Following \cite{AFS1} and \cite{HO} the Dirac operator is introduced. Let $\eta_{\mu \nu}$ be the Minkowski metric on $\R^{1+2}$ with signature $(+,-,-)$. The genuine
matrices $\gamma^{\mu}$ are defined by 
$ \gamma^0 = \sigma^3 \quad \gamma^1=i \sigma^2 \quad \gamma^2 = -i \sigma^1 \, , $
where $\sigma^j$ ($j=1,2,3$) are the Pauli matrices
$$ \sigma^1 = \left( \begin{array}{cc}
0 & 1  \\ 
1 & 0  \end{array} \right)  \, 
 , \, \sigma^2 = \left( \begin{array}{cc}
0 & -i  \\
i & 0  \end{array} \right) \, ,  \, \sigma^3 = \left( \begin{array}{cc}
1 & 0  \\
0 & -1  \end{array} \right) \, .$$ 
Let $\psi$ be a 2-spinor field, i.e. a ${\mathcal C}^2$ - valued function. The Dirac operator of mass $m \ge 0$ is defined by
$$ \mathcal{D} \psi = (i\gamma^{\mu} \partial_{\mu} -m) \psi \, . $$
Defining $\beta = \gamma^0$ , $\alpha_1= \gamma^0 \gamma^1 = \sigma^1$ , $ \alpha_2= \gamma^0\gamma^2 = \sigma^2 $ we obtain
$$ \beta {\mathcal D} \psi = i \partial_t \psi + i \alpha^j \partial_j \psi - m \beta \psi \, . $$
Defining the (2x2)-matrices $\Pi = \Pi(\xi)$ by
$$ \Pi(\xi) = \half(I_{2x2} + \frac{\xi_j \alpha^j}{|\xi|}) \,\quad \, \Pi_{\pm}(\xi) = \Pi(\pm \xi) $$ we obtain
$$ \Pi_{\pm}(\xi)^2 = \Pi_{\pm}(\xi) \, , \, \Pi_+(\xi) \Pi_-(\xi) =0\, , \, I_{2x2} = \Pi_+(\xi) + \Pi_-(\xi) \, , $$
$$ \xi_j \alpha^j = |\xi| \Pi_+(\xi) - |\xi| \Pi_-(\xi)  $$
and also
\begin{align}
\label{7.5}
&\Pi_{\pm}(\xi) = \Pi_{\mp}(-\xi) \quad , \quad \beta \Pi(\xi) = \Pi(-\xi) \beta \, , \\
\label{7.6}
& \alpha^j \Pi(\xi) = \Pi(-\xi) \alpha^j + \frac{\xi_j}{|\xi|} I_{2x2} \, . 
\end{align}
Defining  the operator $ \Pi_{\pm} = \Pi_{\pm}(\frac{\nabla}{i})$ and $\psi_{\pm} = \Pi_{\pm} \psi$ we obtain
$$ \Pi_{\pm}(\beta {\mathcal D} \psi) = -i(i\partial_t \pm D) \psi_{\pm} - m \beta \psi_{\mp} \, . $$
Defining the modified Riesz transforms $R^{\mu}_{\pm}$ by $R^0_{\pm} = -1$ , $R^j_{\pm} = \mp((iD)^{-1}\partial_j)$  the equation (\ref{7.6}) my be rewritten as
\begin{equation}
\label{7.7}
\alpha^{\mu} \Pi_{\pm} = \Pi_{\mp} \alpha^{\mu} \Pi_{\pm} - R^{\mu}_{\pm} \Pi_{\mp} \, .
\end{equation}

Using the matrices $\alpha^{\lambda} $ we can reformulate the CSD system (\ref{0.4}),(\ref{0.5}) as follows, assuming for convenience $\kappa=1$ :
\begin{align}
\label{3.1a}
\partial_{\mu} A_{\nu} -\partial_{\nu} A_{\mu}& = -2 \epsilon_{\mu \nu \lambda} \psi ^{\dag} \alpha^{\lambda} \psi \\
\label{3.1b}
i \partial_t \psi + i \alpha^j \partial_j \psi & = m \beta\psi - \alpha^{\mu} A_{\mu} \psi \, .
\end{align}
Imposing the Lorenz gauge condition $\partial^{\mu} A_{\mu} =0$ and taking $\partial^{\mu}$ of (\ref{3.1a}) this can be formulated as
\begin{align}
\label{3.3a}
\Box \, A_{\mu} & = \partial^{\nu} N_{\mu \nu}(\psi,\psi) \\
\label{3.3b}
i \partial_t \psi + i \alpha^j \partial_j \psi & = m \beta \psi + M(\psi,A) \, ,
\end{align}
where
\begin{align*}
N_{\mu\nu} (\psi_1,\psi_2) & = -2 \epsilon_{\mu \nu \lambda} \psi_1 ^{\dag} \alpha^{\lambda} \psi_2 \\
M(\psi,A) & = -\alpha^{\mu} A_{\mu} \psi \, .
\end{align*}
The initial data are given by
$$ A_{\mu}(0) = a_{\mu} \, , \, \psi(0) = \psi_0 \, , \, \partial_t A_0(0) = -\partial^l a_l \, , \, \partial_t A_j(0) = \partial_j a_0 - 2 \epsilon_{0jk} \psi_0 ^{\dag} \alpha^k \psi_0 \, . $$

By the results of \cite{HO} we may further reformulate this similarly as in the CSH case as
\begin{align*}
(-i \partial_t \pm D)  A_{\nu \pm} & = (2i)^{-1} R^{\mu}_{\pm} N_{\mu \nu} (\psi,\psi) \\
-(-i\partial_t \pm D) \psi_{\pm} & = m \beta \psi_{\mp} + \Pi_{\pm} M(\psi,A) \, .
\end{align*}
We recall that $\psi_{\pm} = \Pi_{\pm} \psi$ , and  $A_{\nu} = A_{\nu +} + A_{\nu -}$ , $A_{\nu \pm} = \half(A_{\nu} \pm i^{-1} D^{-1}\partial_t A_{\nu})$ . Their homogeneous parts are given by (cf. \cite{HO}) :
\begin{align*}
\psi_{\pm}^{hom}(t) & = e^{\pm(-i)tD} \psi_0 \\
A_{0 \pm}^{hom}(t) & = \half e^{\pm(-i)tD} (a_0 \pm (iD)^{-1} \partial^l a_l) \\
A_{j \pm}^{hom}(t) & = \half e^{\pm(-i)tD} (a_j \pm (iD)^{-1} \partial_j a_0)  \, ,
\end{align*}
so that
\begin{align*}
\|A^{hom}_{\nu \pm}\|_{X^r_{s,b,\pm}[0,T]} &\lesssim \sum_{\mu} \|a_{\mu}\|_{\hat{H}^{s,r} }\\ 
\|\phi_{\pm}^{hom}\|_{X^r_{s,b,\pm}[0,T]} & \lesssim \|\psi_0\|_{\hat{H}^{s+\half,r}}
\end{align*}
with implicit constants independent of $T$, where we used the well-known estimates (cf. e.g. \cite{G})
$$ \|e^{\pm(-i)tD} \phi_0\|_{X^r_{s,b,\pm}[0,T]} \lesssim \|\phi_0\|_{\hat{H}^{s,r}} $$
for $b >\frac{1}{r}$.

The following estimates are sufficient for our local well-posedness Theorem \ref{Thm.0.2} by Theorem \ref{Theorem0.3} taking into account the multilinear character of the nonlinearities:
\begin{align}
\label{7.1}
\| R^{\nu}_{\pm 0} N_{\mu \nu}(\psi_1,\psi_2)\|_{X^r_{s,b-1+,\pm 0}} & \lesssim \sum_{\pm 1, \pm 2} \|\psi_{1, \pm 1} \|_{X^r_{s,b,\pm 1}} \|\psi_{2, \pm 2} \|_{X^r_{s,b,\pm 2}} \\
\label{7.2}
\| \Pi_{\pm 0} M(\psi,A_{\nu})\|_{X^r_{s,b-1+,\pm 0}} & \lesssim \sum_{\pm 1, \pm 2} \|\psi_{1, \pm 1} \|_{X^r_{s,b,\pm 1}} \|\psi_{2, \pm 2} \|_{X^r_{s,b,\pm 2}} \, .
\end{align}
where $ \|A_{\pm}\|_{X^r_{s,b,\pm}} := \sum_{\nu} \|A_{\nu\pm}\|_{X^r_{s,b,\pm}}$ .

Applying the commutator relation (\ref{7.7}) we obtain
\begin{align*}
N_{\mu \nu}(\psi_1,\psi_2) & = -2 \sum_{\pm 1,\pm 2} \epsilon_{\mu \nu \lambda} (\psi_{1,\pm 1} ^{\dag} \Pi_{\mp}( \alpha^{\lambda} \psi_{2,\pm 2})) + 2 \sum_{\pm 1,\pm 2} \epsilon_{\mu \nu \lambda} (\psi_{1,\pm 1} ^{\dag} R^{\lambda}_{\mp 2} \psi_{2,\pm 2}) \\
& =: N_{\mu \nu,1} + N_{\mu \nu,2} \, .
\end{align*} 
The estimate for $N_{\mu \nu,2}$ in (\ref{7.1}) follows from
\begin{align*}
&\left| \int \epsilon_{\mu \nu \lambda}(\psi_{1,\pm 1} ^{\dag} R^{\lambda}_{\pm 2} \psi_{,\pm 2}) \overline{R^{\nu}_{\pm 0} \phi_{\pm 0}}\, dt\, dx \right| \\
&\quad \quad\lesssim \|\phi_{0,\pm 0}\|_{X^{r'}_{-s,1-b+,\pm 0}} \|\psi_{1,\pm 1}\|_{X^{r}_{s,b,\pm 1}} \|\psi_{2,\pm 2}\|_{X^{r}_{s,b,\pm 2}} \, .
\end{align*}
The left hand side contains the null form $Q^{\lambda \nu}$ between $\psi_{2,\pm 2}$ and $\phi_{0,\pm 0}$ . Thus by duality we reduce to
$$ \| Q^{\lambda \nu}_{\pm 1,\pm 2} (u,v)\|_{X^{r'}_{-s,-b}} \lesssim \|u\|_{X^{r'}_{-s,1-b+}} \|v\|_{X^r_{s,b}} \, .
$$
By (\ref{Q1}) and (\ref{Q2}) this is a consequence of the estimates
\begin{equation}
\label{8.1}
\|uv\|_{X^{r'}_{-s,\half-b}} \lesssim \|u\|_{X^{r'}_{-s+\half,1-b+}} \|v\|_{X^{r}_{s,b}} 
\end{equation}
and
\begin{equation}
\label{8.1'}
\|uv\|_{X^{r'}_{-s,\half-b}} \lesssim \|u\|_{X^{r'}_{-s,1-b+}} \|v\|_{X^{r}_{s+\half,b}} \, ,
\end{equation}
which by duality is equivalent to
$$ \|vw\|_{X^{r}_{s-\half,b-1-}} \lesssim \|v\|_{X^{r}_{s,b}} \|w\|_{X^{r}_{s,b-\half}}  $$
and
$$ \|vw\|_{X^{r}_{s,b-1-}} \lesssim \|v\|_{X^{r}_{s+\half,b}} \|w\|_{X^{r}_{s,b-\half}} \, . $$
In the case $r=1+$ , $1 \ge b > \frac{1}{r}$ and $s>1$ we use the fractional Leibniz rule and reduce to the following estimates
\begin{align*}
\|vw\|_{X^{r}_{0,b-1-}} &\lesssim \|v\|_{X^{r}_{\half,b}} \|w\|_{X^{r}_{s,b-\half}} \, , \\
 \|vw\|_{X^{r}_{0,b-1-}}&\lesssim \|v\|_{X^{r}_{s,b}} \|w\|_{X^{r}_{\half,b-\half}} \, , \\
 \|vw\|_{X^{r}_{0,b-1-}}&\lesssim \|v\|_{X^{r}_{s+\half,b}} \|w\|_{X^{r}_{0,b-\half}} \, ,
\end{align*}
which is implied by Prop. \ref{Prop.2.3} , thus (\ref{8.1}) and (\ref{8.1'}) are proven in this case. In the case $r=2$, $s= \frac{1}{4}+\epsilon$ and $b= \frac{3}{4}+$ we directly prove (\ref{8.1}) by Prop. \ref{AFS} with parameters $s_0=s=\frac{1}{4}+\epsilon$ , $b_0=b-\half=\frac{1}{4}+$ , $s_1=\half-s= \frac{1}{4}-\epsilon$ , $b_1= 1-b+ = \frac{1}{4}-$, $s_2=s=\frac{1}{4}+\epsilon$ , $b_2=b=\frac{3}{4}+$ , so that $s_0+s_1+s_2 = \frac{3}{4}+\epsilon$ . Similarly (\ref{8.1'}) can be proven with parameters as before but $s_1=-s=-\frac{1}{4}-\epsilon$ , $s_2 = s+\half= \frac{3}{4}+\epsilon$ . By bilinear interpolation we obtain (\ref{8.1}) in the range $1 < r \le 2$ .

The estimate for $N_{\mu \nu, 1}$ in (\ref{7.1}) is a consequence of \cite{AFS1}, Lemma 2, namely the bound
$$ |\Pi(\xi_1) \Pi(-\xi_2) z| \lesssim |z| \angle (\xi_1,\xi_2) \, , $$
which implies
\begin{align*}
& \|\epsilon_{\mu \nu \lambda} (\Pi_{\pm 1} \psi_{1,\pm 1} ^{\dag} \Pi_{\mp 2}( \alpha^{\lambda} \Pi_{\pm 2} \psi_{2,\pm 2}))\|_{X^r_{s,b-1+,\pm 0}} \\
& \quad \quad \quad \quad \quad \quad \lesssim \| \angle(\pm_1 \xi_1,\pm_2 \xi_2) \psi_{1, \pm 1} \psi_{2,\pm 2}\|_{X^r_{s,b-1+,\pm 0}} \, . 
\end{align*}
We apply Lemma \ref{Lemma9.1}. Thus we have to prove the following estimates
\begin{align}
\label{9.1}
\|uv\|_{X^r_{s,b-\half+}} & \lesssim \|u\|_{X^r_{s+\half,b}} \|v\|_{X^r_{s,b}} \\
\label{9.2}
\|uv\|_{X^r_{s,b-1+}} & \lesssim \|u\|_{X^r_{s+\half,b-\half}} \|v\|_{X^r_{s,b}} \\
\label{9.3}
\|uv\|_{X^r_{s,b-1+}} & \lesssim \|u\|_{X^r_{s,b-\half}} \|v\|_{X^r_{s+\half,b}} \, .
\end{align}
For $r=1+$ , $s>1$ , $b= \frac{1}{2r}+\half+$  we conclude as follows. (\ref{9.1}) is by the fractional Leibniz rule reduced to (\ref{4.1'}) or (\ref{4.2'}). (\ref{9.2}) and (\ref{9.3})  are implied by the fractional Leibniz rule combined with Prop. \ref{Prop.2.3}, where $b_1=b-\half$ , $b_2=b$, so that $b_1+b_2 > \frac{1}{r}+\half > \frac{3}{2r}$ . In the case $r=2$ , $s> \frac{1}{4}$ and $b= \frac{3}{4}+$  all estimates follow from Prop. \ref{AFS}. Interpolation completes the proof of (\ref{9.1}) - (\ref{9.3}) in the range $1<r \le 2$ and also (\ref{7.1}).

Now we come to the proof of (\ref{7.2}). We use the commutator formula  (\ref{7.7}) and obtain 
$$ - \Pi_{\pm}(A_{\mu} \alpha^{\mu} \psi) = M_{\pm,1}(\psi,A) +M_{\pm,2}(\psi,A) \, , $$
where
$$ M_{\pm 0,1}(\psi,A) = - \sum_{\pm} \Pi_{\pm 0}(A_{\mu} \Pi_{\mp} \alpha^{\mu} \psi_{\pm}) \; , \; M_{\pm 0,2}(\psi,A) = - \sum_{\pm} \Pi_{\pm 0}(A_{\mu} R^{\mu}_{\pm} \psi_{\pm}) \, . $$
The estimate for $M_{\pm,1}$ in (\ref{7.2}) follows from
\begin{align*}
&\left| \int A_{\mu,\pm 2} (\psi_{0,\pm 0} ^{\dag} \Pi_{\mp} \alpha^{\mu} \psi_{_1,\pm 1}) dt dx \right| \\
& \quad \lesssim \|\psi_{0,\pm 0}\|_{X^{r'}_{-s,1-b-,\pm 0}} \|\psi_{1,\pm 1}\|_{X^{r}_{s,b,\pm 1}} \|\psi_{2,\pm 2}\|_{X^{r}_{s,b,\pm 2}} \, .
\end{align*}
The left hand side contains a null form between $\psi_{0,\pm 0}$ and $\psi_{1,\pm 1}$ as for $N_{\nu \mu,1}$ . Thus it suffices to prove
$$ \| \angle(\pm_0 \xi_0,\pm_1 \xi_1) \psi_{0,\pm 0} \psi_{1,\pm 1}\|_{X^{r'}_{-s,-b,\pm 2}} \lesssim  \|\psi_{0,\pm 0}\|_{X^{r'}_{-s,1-b-,\pm 0}} \|\psi_{1,\pm 1}\|_{X^{r}_{s,b,\pm 1}} \, . $$
By Lemma \ref{Lemma9.1} we have to prove the following six estimates:
\begin{align}
\label{9.4}
\|uv\|_{X^{r'}_{-s,-b+\half}} & \lesssim \|u\|_{X^{r'}_{-s+\half,1-b-}} \|v\|_{X^r_{s,b}} \\
\label{9.5}
\|uv\|_{X^{r'}_{-s,-b+\half}} & \lesssim \|u\|_{X^{r'}_{-s,1-b-}} \|v\|_{X^r_{s+\half,b}} \\
\label{9.6}
\|uv\|_{X^{r'}_{-s,-b}} & \lesssim \|u\|_{X^{r'}_{-s+\half,\half-b-}} \|v\|_{X^r_{s,b}} \\
\label{9.7}
\|uv\|_{X^{r'}_{-s,-b}} & \lesssim \|u\|_{X^{r'}_{-s,\half-b-}} \|v\|_{X^r_{s+\half,b}} \\
\label{9.8}
\|uv\|_{X^{r'}_{-s,-b}} & \lesssim \|u\|_{X^{r'}_{-s+\half,1-b-}} \|v\|_{X^r_{s,b-\half}} \\
\label{9.9}
\|uv\|_{X^{r'}_{-s,-b}} & \lesssim \|u\|_{X^{r'}_{-s,1-b-}} \|v\|_{X^r_{s+\half,b-\half}} \, .
\end{align}
We first consider the case $r=1+$ , $s > 1$ , $b = \frac{1}{2r} + \half+$ . (\ref{9.5}) is by duality equivalent to
$$ \|uv\|_{X^{r}_{s,-1+b+}}  \lesssim \|u\|_{X^{r}_{s,b-\half}} \|v\|_{X^r_{s+\half,b}} \, . $$
Using $-1+b+ \le 0$ and the fractional Leibniz rule this follows from Prop. \ref{Prop.2.3}, because $s+\half > \frac{3}{2} > \frac{3}{2r}$ and $2b-\half > \frac{1}{r}+\half > \frac{3}{2r}$ . Similarly (\ref{9.4}) is equivalent to
$$ \|uv\|_{X^{r}_{s-\half,-1+b+}}  \lesssim \|u\|_{X^{r}_{s,b-\half}} \|v\|_{X^r_{s,b}} \, , $$
which also holds by Prop. \ref{Prop.2.3}. (\ref{9.6}) - (\ref{9.9}) are equivalent to
\begin{align}
\label{9.6'}
 \|uv\|_{X^{r}_{s-\half,b-\half+}} & \lesssim \|u\|_{X^{r}_{s,b}} \|v\|_{X^r_{s,b}} \\
\label{9.7'}
 \|uv\|_{X^{r}_{s,b-\half+}} & \lesssim \|u\|_{X^{r}_{s,b}} \|v\|_{X^r_{s+\half,b}} \\
\label{9.8'}
 \|uv\|_{X^{r}_{s-\half,b-1+}} & \lesssim \|u\|_{X^{r}_{s,b}} \|v\|_{X^r_{s,b-\half}} \\
\label{9.9'}
 \|uv\|_{X^{r}_{s,b-1+}} & \lesssim \|u\|_{X^{r}_{s,b}} \|v\|_{X^r_{s+\half,b-\half}} \, .
\end{align}
(\ref{9.6'}) is the same as (\ref{4.1'}), (\ref{9.7'}) is by the fractional Leibniz rule implied by (\ref{4.1'}) and (\ref{4.2'}), (\ref{9.8'}) and (\ref{9.9'}) follows from Prop. \ref{Prop.2.3}.

The case $r=2$ is handled by Prop. \ref{AFS}, then interpolation implies the whole range $1<r \le 2$ . 

Next we consider the term $M_{\pm 0,2}(\psi,A)$. It has the same structure as $A_{\mu} \partial^{\mu} \phi$ for the CSH system with $\partial^{\mu} \phi$ replaced by $R^{\mu}_{\pm} \psi_{\pm}$. Then using the Hodge decomposition $A_i = A_i^{df} + A_i^{cf}$ we reduce similarly as in the CSH - case to the following estimates.
\begin{align}
\label{11.1}
\|Q^{12}_{\pm 1,\pm 2} (u,v)\|_{X^r_{s,b-1+}} & \lesssim \|u\|_{X^r_{s,b}} \|v\|_{X^r_{s,b}} \, ,\\
\label{11.2}
\|Q^0_{\pm 1,\pm 2} (u,v)\|_{X^r_{s,b-1+}} & \lesssim \|u\|_{X^r_{s,b}} \|v\|_{X^r_{s,b}} \, .
\end{align}
By use of \cite{FK}, Lemma 13.2 the estimate (\ref{11.1}) is reduced to
$$ \|uv\|_{X^r_{s+\half,b-\half+}} \lesssim \|u\|_{X^r_{s+\half,b}} \|v\|_{X^r_{s+\half,b}} \, , $$
which by the fractional Leibniz rule is implied by (\ref{4.4}). The estimate (\ref{11.2}) follows from (\ref{4.3}) and the fractional Leibniz rule.

\end{document}